\newtheorem{thm}{Theorem}[section]
\newtheorem{cor}[thm]{Corollary}
\newcommand{\ZZ}{\mathbb{Z}}
\newcommand{\QQ}{\mathbb{Q}}
\newcommand{\RR}{\mathbb{R}}
\newcommand{\CC}{\mathbb{C}}
\newcommand{\PP}{\mathbb{P}}
\newcommand{\tF}{\tilde{\mathcal{F}}}
\newcommand{\tUx}{\tilde{U}_x}
\newcommand{\hooklongrightarrow}{\lhook\joinrel\longrightarrow}
\DeclarePairedDelimiter\ceil{\lceil}{\rceil}
\DeclarePairedDelimiter\floor{\lfloor}{\rfloor}
\newenvironment{example}
{ \begin{flushleft} \textbf{Example.}}
{ \end{flushleft} }
\newenvironment{remark}
{ \begin{flushleft} \textbf{Remark:}}
{ \end{flushleft} }
\newtheorem{lemma}{Lemma}
\newtheorem{theorem}{Theorem}
\newtheorem{corollary}{Corollary}
\newtheorem{proposition}{Proposition}
\newcounter{a}
\else\stepcounter{a}\fi
\begin{document}
%\label{pp}
\thispagestyle{plain}
\begin{center}
\Large
\textsc{Surface quotient singularities and bigness of the cotangent bundle: Part II}
\end{center}

\begin{center}
\textit{Yohannes D. Asega}, \textit{Bruno De Oliveira}, \textit{Michael Weiss}  \smallskip \\
\begin{tabular}{l}
\small University of Miami \\
\small 1365 Memorial Dr                    \\
\small Miami, FL 33134       \\
\small e-mail: \texttt{y.asega@math.miami.edu}\\
 \small \phantom{e-mail: }\texttt{bdeolive@math.miami.edu}  \\ 
\small \phantom{e-mail: }\texttt{weiss@math.miami.edu}

\end{tabular}
\end{center}

\noindent
 
\medskip 
\begin{abstract}
%    In two parts, we present a bigness criterion for the cotangent bundle of resolutions of orbifold surfaces of general type. As a corollary, we obtain \textit{ the canonical model singularities criterion} that can be applied to determine when a birational class of surfaces has smooth representatives with big cotangent bundle and compare it with other known criteria. We apply the criterion to investigate: when a smooth hypersurfaces in $\PP^3$ of degree $d$ have deformations with big cotangent bundles and  the resolutions of cyclic coverings of $\PP^2$ branched along line arrangements. The criterion involves invariants of canonical singularities whose values were unknown.

In two parts, we present a bigness criterion for the cotangent bundle of resolutions of orbifold surfaces of general type. As a corollary, we obtain  the \textit{canonical model singularities} (CMS) criterion  that can be applied to determine when a birational class of surfaces has smooth representatives with big cotangent bundle and compare it with other known criteria. We then apply this criterion to the problem of finding the minimal degrees $d$ for which the deformation equivalence class of a smooth hypersurface of degree d in $\PP^3$ has a representative with big cotangent bundle; applications to the minimal resolutions of cyclic covers of
$\PP^2$ branched along line arrangements in general position are also obtained. 

The CMS criterion involves invariants of canonical singularities whose values were unknown. In this part of the work, we describe a method of finding these invariants and obtain formulas for $A_n$-singularities. We also use our approach to derive several extension results for symmetric differentials on the complement of the exceptional locus $E$ of the minimal resolution of an $A_n$-singularity; namely, we characterize the precise extent to which the poles along $E$ of symmetric differentials on the complement are milder than logarithmic poles.
\end{abstract}

\noindent\textbf{keywords:} $A_n$ singularities; symmetric differentials;
big cotangent bundle; extension results; surfaces of general type.

\section{Introduction}
\label{sec:a}

\ 

\

In Part I of this work \cite{ADOWI}, we present the (quotient singularities) QS-bigness criterion and its special case, the (canonical model singularities) CMS-criterion, concerning bigness of the cotangent bundle of surfaces of general type $X$. The CMS-criterion for the bigness of the cotangent bundle of $X$ has two terms: one comes from the topology of the minimal model $X_{\text{min}}$ of $X$ and the other comes  from the contribution of the singularities in the canonical model $X_{\text{can}}$ of $X$ to the $m$-asymptotic growth of $h^1(X_{\text{min}},S^m\Omega^1_{X_{\text{min}}})$. The main purpose of this paper is to determine this contribution when the singularities are of type $A_n$. Our approach also leads to several extension results for symmetric differentials on the complement of the exceptional locus of the minimal resolution of an  $A_n$ singularity.

\ 

In Section 1, we give the background and define the strategy to determine  the $1$st cohomological $\Omega$-asymptotics, ${h}_\Omega^1(y)$,  of a log terminal (quotient) surface singularity $y$. These are the surface invariants involved in the QS-bigness criterion. Our approach uses the theory developed by Wahl \cite{wahl_chernclasses}, Blache \cite{blache} and Langer \cite{langer} for  the Chern classes (local and global) and the asymptotic Riemann-Roch formulas for orbifold vector bundles.  

\ 

The $1$st cohomological $\Omega$-asymptotics, ${h}_\Omega^1(y)$, of a surface singularity $y$  is defined by

$${h}_\Omega^1(y)=\liminf_{m\to \infty}\frac{h^0(U_y,R^1\sigma_*S^m\Omega^1_{U_y})}{m^3},$$ 

\noindent where $U_y$  is a neighborhood germ of the surface singularity $y$ and  $\sigma:\tilde {U}_y\to U_y$ its  minimal resolution. It was shown in the proof of the QS-criterion (Theorem 2 of Part I) that if $X$ is the minimal resolution of a surface of general type $Y$ with quotient singularities, then

$$\sum_{y \in \operatorname{Sing}(Y)} {h}_\Omega^1(y)\le h^1_\Omega(X):=\lim_{m\to \infty}\frac {h^1(X,S^m\Omega^1_{X})}{m^3}$$

The right side $h^1_\Omega(X)$ is the measurement of the $m$-asymptotic growth of $h^1(X,S^m\Omega^1_{X})$. The sum on the left side is called \textit{the localized  component  of  
 $h^1_\Omega(X)$} and we denote it by $Lh^1_\Omega(X)$. It is a lower bound for $h^1_\Omega(X)$ determined by the singularities of $Y$.

\ 

 Our approach to find ${h}_\Omega^1(y)$ uses the relations between the local invariants appearing in the comparison  between the  Euler characteristics an orbifold vector bundle $V$ on an orbifold surface $Y$ and of a vector bundle $\tilde V$  on its minimal resolution $X$, $\sigma:X\to Y$, with $V=\sigma_*\tilde V$. The key relation for us, requires $m$-asymptotic results  for $V=(S^m\Omega^1_Y)^{\vee\vee}$  (see section 3.1), is:

 $$\lim_{m\to \infty} \frac{\hslash^0(y,m)+h^1(y,m)}{m^3}=\frac{1}{3!}\left((c_2(y,T_{{X}})-c_1^2(y,T_{{X}})\right)$$

\noindent where $h^1(y,m):=h^0(U_y,R^1\sigma_*S^m\Omega^1_{U_y})$, $c_2(y,T_{{X}})$, $c_1^2(y,T_{{X}})\in \QQ$ are the local Chern numbers of $y$ and 

$$\hslash^0(y,m)=\dim [H^0(\tilde{U_y}\setminus E, S^m\Omega_{\tilde{U_y}}^1)/H^0(\tilde{U_y},S^m \Omega_{\tilde{U_y}}^1)],$$

\noindent where $(\tilde U_y,E)$ is the minimal resolution of the neighborhood germ $(U_y,y)$.
 
\

In Section 2, we present a method to find the invariants $\hslash^0(y,m)$ and their asymptotics when $y$ is an $A_n$ singularity. We show that for fixed $n$, $\hslash^0(A_n,m)$ is a polynomial of degree 3 in $m$ up to the linear term. In fact, it is a quasi-polynomial in $m$, i.e. coefficients are periodic functions of $m$, with the cubic and quadratic coefficients constant. The main interest in this work, with respect to the discussion above, lies in finding the cubic term of such a polynomial. In \hyperref[thm:1]{Theorem 1}, we give a closed formula for the cubic and quadratic coefficients of $\hslash^0(A_n,m) $. We note that the special case of $A_1$ was considered in \cite{bogomolov_nodes}, see \cite{thomas} for the cubic coefficient and \cite{bruin2022explicit} for the quasi-polynomial.

\

\begin{theorem}\label{thm:1}  Let $(\tilde X,E)$ be  the minimal resolution of the germ   $(X,x)$ of an  $A_n$ singularity. Then the following holds for $\hslash^0(A_n,m)=\dim [H^0(\tilde {X}\setminus E,S^m\Omega^1_{\tilde {X}}) / H^0(\tilde {X},S^m\Omega^1_{\tilde {X}})]$:

\begin{itemize}

\item  [(a)] $\hslash^0(A_n,m)$ is given by a weighted lattice sum over a polygon $\mathcal{P}_n(m)$:

$$\hslash^0(A_n,m)=\sum_{\substack{  {\bf x}=(x_1,x_2)\in \mathcal{P}_n(m)\cap \ZZ^2\\x_1+(n+1) x_2\equiv m  \bmod{2}}}h_{n,m}({\bf x}) $$
\hspace {-0.05in}
\noindent where

\begin{itemize}

    \item [i)] the polygon $\mathcal{P}_n(m)$ is symmetric about the $x_2$-axis and $\mathcal{P}_n(m)\cap \{x_2\ge 0\}$ is given by the inequalities $-x_1\le 0$, $-x_2\le 0$, $x_1-(n-1)x_2\le m$ and $-x_1+(n+1)x_2\le m+2$.

    \item [ii)] the weight function $h_{n,m}({\bf x})=\min \left\{\displaystyle\sum_{r=0}^{n-1} \alpha_{n,m,r}({\bf x}),\beta_{n,m}({\bf x})\right\}$, with 
    \begin{align*}
    \alpha_{n,m,r}({\bf x})&=\max\left\{0,\frac{m-x_1+(2r-n+1)x_2}{2}\right\}\\
    \beta_{n,m}({\bf x})&=\max\left\{0,m+1-\alpha_{n,m,-1}({\bf x})-\alpha_{n,m,n}({\bf x})\right\}.
    \end{align*} 
\end{itemize}

 \item [(b)] $\hslash^0(A_n,m)=\hslash^0_\Omega(A_n)m^3+3\hslash^0_\Omega(A_n)m^2+O(m)$, with

 \begin{align*} \hslash^0_\Omega(A_n):=\lim_{m\to \infty}\frac{\hslash^0(A_n,m)}{m^3}=\frac {4}{3}\sum^n_{j=1}\frac {1}{j^2}-\frac{12n^4+65n^3+117n^2+72n}{6(n+1)^2(n+2)^2}.\end{align*}
\end{itemize}
\end{theorem}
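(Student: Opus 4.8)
The plan is to reduce the whole computation to $\mu_{n+1}$-invariant symmetric differentials on the smooth cover $\CC^2$ and to read the pole order along each exceptional curve off the toric geometry of the resolution. First I would use $\tilde X\setminus E\cong X\setminus\{x\}=(\CC^2\setminus\{0\})/\mu_{n+1}$, the action being $(s,t)\mapsto(\zeta s,\zeta^{-1}t)$. The étale cover together with Hartogs identifies $H^0(\tilde X\setminus E,S^m\Omx)$ with the $\mu_{n+1}$-invariant holomorphic symmetric $m$-differentials on $\CC^2$; since $\tfrac{ds}{s},\tfrac{dt}{t}$ are themselves invariant, this space is graded by the weight $(p,q)$ of $\chi^{(p,q)}=s^pt^q$, the invariant weights being exactly $p\equiv q\bmod(n+1)$. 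For a fixed weight I encode a differential by the binary form $P(X,Y)=\sum_k c_kX^kY^{m-k}$ of degree $m$, where $c_k$ is the coefficient of $\chi^{(p,q)}(\tfrac{ds}{s})^k(\tfrac{dt}{t})^{m-k}$. Holomorphicity on $\CC^2$ is the monomialwise condition $m-q\le k\le p$, i.e. $P$ vanishes to order $L_0:=\max(0,m-p)$ at $[1:0]$ and to order $L_{n+1}:=\max(0,m-q)$ at $[0:1]$; hence the weight-$(p,q)$ part of $H^0(\tilde X\setminus E,S^m\Omx)$ has dimension $\max(0,m+1-L_0-L_{n+1})$.

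The crux is the vanishing order along each $E_r$. The minimal resolution is the toric surface subdividing the cone of the $A_n$ singularity by the rays $w_r=\tfrac{1}{n+1}(n+1-r,\,r)$, $r=0,\dots,n+1$, so that $w_1,\dots,w_n$ are the $(-2)$-curves $E_1,\dots,E_n$ and $w_0,w_{n+1}$ the two boundary divisors. Working in the smooth chart $\langle w_r,w_{r+1}\rangle$ and expanding the invariant frame $(\tfrac{ds}{s},\tfrac{dt}{t})$ in the local frame $(\tfrac{du}{u},\tfrac{dv}{v})$ via the $GL_2(\QQ)$-matrix dual to $w_r,w_{r+1}$, I would show that the order of the weight-$(p,q)$ form $P$ along $E_r$ equals
\[
\operatorname{ord}_{E_r}=N_r-m+j_r,\qquad N_r:=\langle(p,q),w_r\rangle=\frac{(n+1-r)p+rq}{n+1},
\]
where $j_r$ is the vanishing order of $P$ at the direction point $P_r:=[\,n+1-r:r\,]\in\PP^1$ (the $-m$ shift records the passage from $\tfrac{du}{u}$ to $du$). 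Regularity across $E_r$ is thus $j_r\ge L_r:=\max(0,m-N_r)$, and $N_0=p$, $N_{n+1}=q$ recover the two holomorphicity conditions. Since $P_0,\dots,P_{n+1}$ are $n+2$ distinct points of $\PP^1$, the forms with $\operatorname{ord}_{P_r}P\ge L_r$ are precisely the multiples of $\prod_r(\mathrm{linear}_r)^{L_r}$, so the weight-$(p,q)$ part of $H^0(\tilde X,S^m\Omx)$ has dimension $\max(0,m+1-\sum_{r=0}^{n+1}L_r)$.

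Assembling (a) is then bookkeeping. Setting $x_1=p+q-m$, $x_2=(p-q)/(n+1)$ turns $p\equiv q\bmod(n+1)$ together with $p,q\in\ZZ$ into the congruence $x_1+(n+1)x_2\equiv m\bmod 2$, and a direct substitution gives $m-N_r=\tfrac12\big(m-x_1+(2r-n-1)x_2\big)$, so that $L_{r+1}=\alpha_{n,m,r}$ for $0\le r\le n-1$, while $L_0=\alpha_{n,m,-1}$ and $L_{n+1}=\alpha_{n,m,n}$. Writing $\beta_{n,m}=\max(0,m+1-L_0-L_{n+1})$ for the upstairs dimension and $S_{\mathrm{int}}=\sum_{r=0}^{n-1}\alpha_{n,m,r}=\sum_{r=1}^{n}L_r$ for the interior sum, the dimension of the regular part is $\max(0,\beta_{n,m}-S_{\mathrm{int}})$, and the elementary identity $b-\max(0,b-a)=\min(a,b)$ (for $a,b\ge 0$) gives the per-weight contribution $\beta_{n,m}-\max(0,\beta_{n,m}-S_{\mathrm{int}})=\min(S_{\mathrm{int}},\beta_{n,m})=h_{n,m}(\mathbf{x})$. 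The four half-plane inequalities cutting out $\mathcal P_n(m)$ are identified with the loci where the relevant maxima switch sign (namely $\beta_{n,m}>0$ on two sides and $L_n>0$, equivalently $N_n<m$, on the third), and the reflection symmetry of $\mathcal P_n(m)$ is the $s\leftrightarrow t$ symmetry $p\leftrightarrow q$, i.e. $x_2\mapsto-x_2$.

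For (b), $h_{n,m}$ is piecewise quasi-polynomial and homogeneous of degree one, and $\mathcal P_n(m)$ is a dilate of a fixed rational polygon, so by Ehrhart/Euler–Maclaurin the sum is a degree-$3$ quasi-polynomial whose leading (volume) coefficient is constant. The leading coefficient is
\[
\hslash^0_\Omega(A_n)=\tfrac12\int_{\mathcal P_n(1)}\widehat h(\mathbf u)\,d\mathbf u,\qquad \widehat h=\min\Big(\textstyle\sum_{r=0}^{n-1}\widehat\alpha_r,\ \widehat\beta\Big),
\]
the factor $\tfrac12$ being the density of the congruence sublattice and $\widehat\alpha_r,\widehat\beta$ the $m\to\infty$ normalizations of $\alpha_{n,m,r},\beta_{n,m}$. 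I expect the \emph{main obstacle} to be this integral: one must split $\mathcal P_n(1)$ along $\{\sum_r\widehat\alpha_r=\widehat\beta\}$ and integrate the sum of ramp functions $\sum_r\max(0,\cdots)$, whose breakpoints in $r$ produce the harmonic-type term $\tfrac{4}{3}\sum_{j=1}^n j^{-2}$, while the $\widehat\beta$-cap and the polygon vertices contribute the rational function $\tfrac{12n^4+65n^3+117n^2+72n}{6(n+1)^2(n+2)^2}$. Finally I would obtain the quadratic coefficient $=3\hslash^0_\Omega(A_n)$ by recentering the count in the variable $m+1$ (note $\beta_{n,m}$ carries $m+1$ and the polygon carries $m+2$), so that the top-degree part is $\hslash^0_\Omega(A_n)(m+1)^3+O(m)$; pinning this down exactly, rather than merely bounding the error, is the second delicate point and is where a reciprocity/Euler–Maclaurin argument for the half-integer lattice sum is needed.
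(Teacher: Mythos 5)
Your part (a) is correct and follows essentially the same route as the paper: you pass to $\mu_{n+1}$-invariant differentials on $\CC^2$, decompose by the weight of the coefficient of the invariant logarithmic frame (this is exactly the paper's block partition $B_{k,i,m}$, in different clothing), encode each weight piece as a binary form of degree $m$, and observe that regularity along $E_0,\dots,E_{n+1}$ becomes vanishing to prescribed orders at $n+2$ \emph{distinct} points of $\PP^1$, whence the conditions are independent and the codimension count gives $\min\bigl\{\sum_r\alpha_{n,m,r},\beta_{n,m}\bigr\}$ via $b-\max(0,b-a)=\min(a,b)$. That is precisely the content of the paper's Proposition 1, Lemma 1 (general position of the subspaces $\varphi_r^*V^{\mathrm{reg},1}_{\hat k,i,m,r}$, proved by the same distinct-points argument) and Corollary 1; your toric description of the charts is a cosmetic variant of the paper's explicit maps $\varphi_r$, and your index bookkeeping (including the sign convention $x_2=-\hat k$, harmless by the $x_2\mapsto -x_2$ symmetry) checks out.

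The genuine gap is in part (b). You correctly set up the strategy the paper uses — replace the congruence-restricted lattice sum by $\tfrac12\iint_{\mathcal P_n(m)}h_{n,m}$ up to $O(m)$, exploit the $x_2$-symmetry, and reduce to a polygonal integral — but you stop exactly where the theorem's quantitative content begins: the integral is never evaluated, and you flag it yourself as the ``main obstacle.'' The paper does this by explicitly decomposing $\mathcal P^+_n(m)=\mathcal P_n(m)\cap\{x_2\ge 0\}$ into $n+2$ sub-polygons $\mathcal P^\ell_n(m)$ with listed vertices $v_j$ (affine in $m$) on each of which $h_{n,m}$ is a single affine function, and summing the resulting elementary integrals; this is where the harmonic term $\tfrac43\sum_{j=1}^n j^{-2}$ and the rational term actually come from, and without it the closed formula for $\hslash^0_\Omega(A_n)$ is not established. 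Two further points need more than you give them: (i) the $O(m)$ error in the sum-to-integral step must be justified uniformly in $m$ (the paper's Claims 1 and 2 do this by bounding the discrepancy on the $O(m)$ lattice points near edges and transition lines of the piecewise-affine structure, using that the slopes $a_j(n),b_j(n)$ are independent of $m$); and (ii) the assertion that the quadratic coefficient equals exactly $3\hslash^0_\Omega(A_n)$ does not follow from a heuristic recentering at $m+1$, since the defining data of $\mathcal P_n(m)$ and of $h_{n,m}$ involve $m$, $m+1$ and $m+2$ simultaneously, so the integral is not homogeneous in $m+1$; one must expand the explicit integral to second order.
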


\ 

\ 

To understand the weight function, some setup needs to be introduced. Let $\tilde X$ be  minimal resolution of the $A_n$ affine model $X=\{xz+y^{n+1}=0\}\subset \CC^3$ and $E$ the exceptional locus. Using the smoothing $\pi:\CC^2\to X$ we define a 3-gradation on the algebra $S(\tilde X\setminus E):=\bigoplus_{m=0}^\infty H^0(\tilde X\setminus E,S^m\Omega^1_{\tilde X})$. A differential $w$ has 3-degree $(\hat k,i,m)$ if $w\in H^0(\tilde {X}\setminus E,S^m\Omega^1_{\tilde {X}})$ and when $w$ is viewed in $H^0(\CC^2,S^m\Omega^1_{\CC^2})^{\ZZ_{n+1}}$ it vanishes to order $i$ at $0$, for the degree $\hat k$ it comes from a block partition on the $\ZZ_{n+1}-$ invariant differential monomials, see \hyperref[sec:block-partition]{Section 2.2.1}.

\

The weight function $h_{n,m}({\bf x})$ gives the dimension of the obstruction space for differentials in the $(x_2,x_1,m)$-graded piece of $S(\tilde X\setminus E)$  to extend regularly along $E$. The polygon $\mathcal{P}_n(m)$ lives on the $(x_1,x_2)$-plane, $x_1=i$ and $x_2=\hat k$. The weight function naturally induces a decomposition of the polygon  $\mathcal{P}_n(m)=\cup_{\ell=1}^{4n}\mathcal{P}_n^{\ell}(m)$ by convex polygons where the weight functions $h_{n,m,\ell}({\bf x}):=h_{n,m}({\bf x})|_{\mathcal{P}_n^{\ell}(m)}$ are defined by a single polynomial of degree 1 in $x_1$, $x_2$ (and $m$). 

\ 

The function $\hslash^0(A_n,m)$ being a quasi-polynomial of degree 3 in $m$ follows from the theory of polynomial weighted lattice sums over convex polytopes $\mathcal{P}({\bf b})$, where the parameter ${\bf b}=(b_1,...,b_k)$  defines $\mathcal{P}({\bf b})$ via the inequalities, $\mu_l({\bf x} )\le b_l$, where the linear  forms $\mu_l({\bf x})$ are fixed, but ${\bf b}$ varies (see \cite{Ehrhart}, \cite{Brion}, \cite{Vergne}). Each polygon 
$\mathcal{P}_n^{\ell}(m)$ is of the form $\mathcal{P}({\bf b}(m))$ with ${\bf b}(m)=(\alpha_{\ell,1}m+\beta_{\ell,1},...,\alpha_{\ell,k}m+\beta_{\ell,k}$ with $k=3,4$.  Relevant to the task of finding the period of the quasi-polynomial $\hslash^0(A_n,m)$ (i.e. the lcm of the periods of the coefficients of the quasi-polynomial) is that $\alpha_{\ell,i},\beta_{\ell,i}$ and the coefficients of the fixed linear forms $\mu_l({\bf x})$ are in $\QQ$ (and we know the denominators).

\ 
 
 On future work we will describe non-asymptotic features of the quasi-polynomial $\hslash^0(A_n,m)$. One such feature is a divisibility condition  for the period. This result allows us to determine  the quasi-polynomials $\hslash^0(A_n,m)$ for low $n$ and use them to investigate the presence of symmetric differentials of low degrees on the resolution of certain classes  surfaces (e.g. hypersurfaces in $\PP^3$) with $A_n$ type singularities.

\

In Section 3, we derive applications of our knowledge of $\hslash^0(A_n,m)$ and the theory developed in Section 2 describing the extension properties of differentials on $\tilde X\setminus E$. The first application is the main purpose of Part II of this work, that is, finding the closed formula for the $1$st cohomological $\Omega$-asymptotics ${h}_\Omega^1(x)$ for $A_n$ singularities. 

\smallskip 

\begin{theorem}\label{thm:2} The 1st cohomological $\Omega$-asymptotics of an $A_n$ singularity is given by:

\begin{align*} h^1_\Omega(A_n)=\frac{n^5+19n^4+83n^3+137n^2+80n}{6(n+1)^2(n+2)^2}-\frac {4}{3}\sum^n_{k=1}\frac {1}{k^2}, \end{align*}

\end{theorem}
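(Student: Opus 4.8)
The plan is to obtain $h^1_\Omega(A_n)$ as an immediate consequence of the asymptotic identity of Section 3.1 together with \hyperref[thm:1]{Theorem 1}(b). That identity,
$$\lim_{m\to\infty}\frac{\hslash^0(A_n,m)+h^1(A_n,m)}{m^3}=\frac{1}{3!}\bigl(c_2(A_n,T_{\tilde X})-c_1^2(A_n,T_{\tilde X})\bigr),$$
pins down the sum of the two third-order growth rates in terms of the local Chern numbers of the singularity. Since \hyperref[thm:1]{Theorem 1}(b) already establishes that $\hslash^0(A_n,m)/m^3$ converges to $\hslash^0_\Omega(A_n)$, and the left-hand side above is a genuine limit, the quotient $h^1(A_n,m)/m^3$ must converge as well; in particular the $\liminf$ in the definition of $h^1_\Omega(A_n)$ is an honest limit, and
$$h^1_\Omega(A_n)=\frac{1}{6}\bigl(c_2(A_n,T_{\tilde X})-c_1^2(A_n,T_{\tilde X})\bigr)-\hslash^0_\Omega(A_n).$$
Thus the only new input needed beyond \hyperref[thm:1]{Theorem 1} is the value of the local Chern numbers, after which the result is pure algebra.

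For the local Chern numbers I would appeal to the Wahl--Blache--Langer formalism set up in Section 1. The $A_n$ singularity $\CC^2/\ZZ_{n+1}$ is canonical: its minimal resolution is a chain of $n$ smooth rational $(-2)$-curves, so $K_{\tilde X}\cdot E_i=0$ for every exceptional component and $K_{\tilde X}=\sigma^*K_X$ with vanishing discrepancies. Consequently the local $c_1^2$ correction vanishes, $c_1^2(A_n,T_{\tilde X})=0$. The local $c_2$ is the orbifold Euler-number defect: the resolution germ, a chain of $n$ copies of $\PP^1$, has topological Euler number $n+1$, while the orbifold point contributes $\tfrac{1}{n+1}$, so
$$c_2(A_n,T_{\tilde X})=(n+1)-\frac{1}{n+1}=\frac{n(n+2)}{n+1}.$$
Hence $c_2(A_n,T_{\tilde X})-c_1^2(A_n,T_{\tilde X})=\tfrac{n(n+2)}{n+1}$. (As a consistency check, for $n=1$ this gives $\tfrac32$, recovering the node value underlying the $A_1$ computations of \cite{bogomolov_nodes}.)

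The final step is a direct substitution. Inserting this value and the formula for $\hslash^0_\Omega(A_n)$ from \hyperref[thm:1]{Theorem 1}(b) yields
$$h^1_\Omega(A_n)=\frac{n(n+2)}{6(n+1)}+\frac{12n^4+65n^3+117n^2+72n}{6(n+1)^2(n+2)^2}-\frac{4}{3}\sum_{k=1}^n\frac{1}{k^2}.$$
The transcendental term $\tfrac{4}{3}\sum 1/k^2$ is supplied entirely by $-\hslash^0_\Omega(A_n)$ — the Chern-number side of the identity is rational — so it survives unchanged into the answer. Placing the two rational terms over the common denominator $6(n+1)^2(n+2)^2$ and using
$$n(n+2)\cdot(n+1)(n+2)^2=n(n+1)(n+2)^3=n^5+7n^4+18n^3+20n^2+8n,$$
the combined numerator becomes $n^5+19n^4+83n^3+137n^2+80n$, which is exactly the stated formula. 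Because the genuinely hard analytic work — the evaluation of $\hslash^0_\Omega(A_n)$ — is already done in \hyperref[thm:1]{Theorem 1}, the only place in this argument where care is required, and the main potential obstacle, is the correct normalization of the two local Chern numbers: the vanishing of $c_1^2$ forced by canonicity and the value $(n+1)-\tfrac{1}{n+1}$ of the local $c_2$, since a sign or normalization slip there would propagate directly into the rational part of $h^1_\Omega(A_n)$.
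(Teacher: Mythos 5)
Your proposal is correct and follows essentially the same route as the paper: both rest on the relation $h^1_\Omega(A_n)=-\tfrac{s_2(A_n)}{3!}-\hslash^0_\Omega(A_n)$ (the paper unpacks this via (1.7)--(1.9) together with the vanishing of $\lim_m \mu(A_n,m)/m^3$, which is exactly what your quoted identity packages), the same local Chern numbers $c_1^2(A_n)=0$ and $c_2(A_n)=(n+1)-\tfrac{1}{n+1}=\tfrac{n(n+2)}{n+1}$, and the same substitution of Theorem 1(b). Your algebra checks out, so nothing further is needed.
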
 

\  

In Part I \cite{ADOWI} Section 3, Theorem 2 is used in combination with the CMS-bigness criterion to obtain the strongest known results  concerning:

\smallskip 

i) the minimum  $d_{\min}$ of the degrees $d$ for  which the deformation equivalence class of a smooth hypersurfaces of  $\PP^3$ of degree $d$ has  representatives with big cotangent bundle. Theorem 4 of Part I shows $d_{\min}\le 8$, moreover in theory with Theorem 2 and considering resolutions of hypersurfaces with only $A_n$ singularities one could achieve $d_{\min}\le 6$, but never $d_{\min}=5$;

\smallskip 

ii) bigness of the cotangent bundle of the resolutions of cyclic coverings of $\PP^2$ branched along line arrangements, see Theorem 5 of Part I.

\

\

The second application is concerned with extension results and characterizes: 1) the precise extent to which the poles along $E$ of symmetric differentials on the complement of the exceptional locus $E$ of the minimal resolution of an  $A_n$ singularity are milder than logarithmic poles (\cite{miyaoka1984maximal} 4.14, see also \cite{wahl_chernclasses} 4.7 showed that the poles are at most logarithmic);  2) a sufficient condition for a symmetric differential  on the complement of the exceptional locus $E$ of the minimal resolution of the germ of an  $A_n$ singularity to extend holomorphically through $E$.

\

\begin{theorem}\label{thm:3} Let $\sigma:(\tilde X,E)\to (X,x)$ and $\pi:(\CC^2,0)\to (X,x)$ be the minimal resolution and the smoothing of  the $A_n$ singularity germ  $(X,x)$.  Then:

\begin{itemize}
    \item [(a)] The maximal divisor $D$ such that:
\vspace {-.05in}
\begin{align*}H^0(\tilde {X}\setminus E,S^m\Omega^1_{\tilde {X}})= H^0(\tilde {X},S^m\Omega^1_{\tilde {X}}(\log E)\otimes \mathcal {O}_{\tilde X}(-D))  \end{align*}
is given by
\vspace {-.18in}

\begin{align*} D =\sum_{r=1}^{n}\left(\sum_{j=0}^{\min(r-1,n-r)}\ceil{\frac{m-2j}{n+1}}\right)E_r
         \end{align*}

         \noindent with $E=U_{r=1}^nE_r$ the  exceptional locus (ordering such  that $E_r\cap E_{r+1}\neq\emptyset$).

         \item [(b)] Let $w\in H^0(X\setminus {x},S^m\Omega^1_X)$ and $\bar w\in H^0(\CC^2,S^m\Omega^1_{\CC^2})$ with $\bar w|_{\CC^2\setminus 0}=\pi^*w$. Set $ord(w):=\max_{i}\{\bar w \in \mathfrak{m}^i S^m\Omega^1_{\CC^2}  \}$, $\mathfrak{m}$ the maximal ideal at $0$. 
         
         \smallskip  
         
         Then $\sigma^*w$ extends holomorphically to $\tilde X$, if ord$(w) \ge nm$.
\end{itemize}
\end{theorem}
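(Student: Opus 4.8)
The plan is to move the entire question onto the smooth cover $\CC^2$ and read off everything from the toric geometry of the minimal resolution. First I would record the chain of identifications
\[
H^0(\tilde X\setminus E,S^m\Omega^1_{\tilde X})\;=\;H^0(X\setminus\{x\},S^m\Omega^1_X)\;=\;H^0(\CC^2,S^m\Omega^1_{\CC^2})^{\ZZ_{n+1}},
\]
the first because $\sigma$ is an isomorphism off $E$, the second by descent along $\pi$ together with Hartogs (the branch locus has codimension two). Hence the space in question has the monomial basis $u^av^b(du)^i(dv)^{m-i}$ with $a,b\ge 0$, $0\le i\le m$, subject to the invariance condition $a-b+2i-m\equiv 0\pmod{n+1}$, where $x=u^{n+1},\,y=uv,\,z=v^{n+1}$. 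Realizing $A_n=\CC^2/\ZZ_{n+1}$ as the affine toric surface for the overlattice $N=\ZZ^2+\ZZ\,\tfrac1{n+1}(1,-1)$, the exceptional curve $E_r$ is the toric divisor of the primitive ray generator $w_r=\tfrac1{n+1}(n+1-r,\,r)$, so that for any invariant monomial function
\[
\operatorname{ord}_{E_r}(u^pv^q)=\langle (p,q),w_r\rangle=\frac{(n+1-r)p+r\,q}{n+1},
\]
and in particular $\operatorname{ord}_{E_r}(u)=\tfrac{n+1-r}{n+1}$, $\operatorname{ord}_{E_r}(v)=\tfrac{r}{n+1}$.

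The engine for both parts is a single order formula for monomials. Since $\tilde X$ is a smooth toric surface, $\Omega^1_{\tilde X}(\log B)$ is free on $\tfrac{du}u,\tfrac{dv}v$ (here $B=E+A$ is the full toric boundary, $A$ the strict transforms of the two axes), so $S^m\Omega^1_{\tilde X}(\log B)$ is free on the monomials $(\tfrac{du}u)^i(\tfrac{dv}v)^{m-i}$. Writing
\[
w=u^av^b(du)^i(dv)^{m-i}=u^{a+i}v^{b+m-i}\Big(\tfrac{du}u\Big)^i\Big(\tfrac{dv}v\Big)^{m-i}
\]
and noting that $\log B$ and $\log E$ agree at the generic point of $E_r$ (where $A$ is absent), I get the logarithmic order
\[
v_r^{\log}(w):=\operatorname{ord}_{E_r}\!\big(w;\,S^m\Omega^1_{\tilde X}(\log E)\big)=\operatorname{ord}_{E_r}(u^{a+i}v^{b+m-i})\ge 0,
\]
which simultaneously reproves the logarithmic-pole bound of Wahl--Miyaoka. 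For part (a), because the space is spanned by monomials and $v_r^{\log}$ is a divisorial valuation, the maximal $D=\sum_r d_rE_r$ realizing the stated equality is forced to have $d_r=\min_w v_r^{\log}(w)$: the inclusion $H^0(S^m\Omega^1_{\tilde X}(\log E)(-D))\subseteq H^0(\tilde X\setminus E,S^m\Omega^1_{\tilde X})$ is automatic, and both equality and maximality amount to saying every monomial satisfies $v_r^{\log}\ge d_r$ with the bound attained (the extension across the finitely many intersection points of $E$ being free by normality). Putting $P=a+i$, $Q=b+m-i$, this is the lattice minimization
\[
d_r=\min\Big\{\tfrac{(n+1-r)P+rQ}{n+1}\ :\ P,Q\ge 0,\ P+Q\ge m,\ P\equiv Q\pmod{n+1}\Big\}.
\]

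For part (b) I would upgrade the order formula to the honest (non-logarithmic) sheaf. At the generic point of $E_r$ the residues of $\tfrac{du}u$ and $\tfrac{dv}v$ are $\tfrac{n+1-r}{n+1}\ne 0$ and $\tfrac{r}{n+1}\ne 0$, so the polar part of $(\tfrac{du}u)^i(\tfrac{dv}v)^{m-i}$ is a nonzero multiple of $(\tfrac{dt_r}{t_r})^m$, where $E_r=\{t_r=0\}$; hence this log monomial has honest pole order exactly $m$ and
\[
\operatorname{ord}_{E_r}\!\big(w;\,S^m\Omega^1_{\tilde X}\big)=v_r^{\log}(w)-m .
\]
Thus $\sigma^*w$ is regular along $E_r$ precisely when $v_r^{\log}(w)\ge m$, i.e.\ $(n+1-r)P+rQ\ge (n+1)m$. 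If $\operatorname{ord}(w)\ge nm$ then $\bar w\in\mathfrak m^{\,nm}S^m\Omega^1_{\CC^2}$, so every monomial occurring in $\bar w$ has $a+b\ge nm$, whence $P+Q=a+b+m\ge (n+1)m$. Since $\min_{1\le r\le n}\big((n+1-r)P+rQ\big)=(P+Q)+(n-1)\min(P,Q)\ge P+Q$, we get $v_r^{\log}\ge m$ for every $r$ and every monomial of $\bar w$; summing over monomials, $\sigma^*w$ extends across the generic point of each $E_r$, and then across all of $E$ because $S^m\Omega^1_{\tilde X}$ is locally free and the residual intersection points of $E$ have codimension two.

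The main obstacle is the exact evaluation in part (a): turning the single lattice minimum into the telescoping sum $\sum_{j=0}^{\min(r-1,n-r)}\ceil{\tfrac{m-2j}{n+1}}$. Parametrizing by $N=P+Q\ge m$ and $k=\tfrac{P-Q}{n+1}\in\ZZ$ collapses the functional to $\tfrac12\big(N+k(n+1-2r)\big)$, so the problem becomes a one-dimensional optimization governed by the bound $|(n+1)k|\le N$ and the parity constraint $N\equiv (n+1)k\pmod 2$; extracting the closed form is exactly the bookkeeping of these congruences. This is where I expect the per-graded-piece analysis of Section 2 to carry the load: the block partition organizes the admissible monomials so that the deficit $\max(0,m-v_r^{\log})$ controlling non-extension past $E_r$ splits as a sum over the blocks $j=0,\dots,\min(r-1,n-r)$, each contributing a single ceiling term, and the visible symmetry $r\leftrightarrow n+1-r$ of the answer is precisely the coordinate swap $u\leftrightarrow v$, $E_r\leftrightarrow E_{n+1-r}$. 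I would therefore perform the combinatorics inside that framework, using the weight functions $\alpha_{n,m,r}$ of \hyperref[thm:1]{Theorem 1} as the per-curve obstruction counts rather than re-deriving the lattice count by hand.
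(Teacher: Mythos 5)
Your reduction is, in substance, the same as the paper's: the paper works chart by chart on the cover $\{U_r\}$ and rewrites each block monomial as $u_1^{P}u_2^{Q}\frac{du_1^{m-q}}{u_1^{m-q}}\frac{du_2^{q}}{u_2^{q}}$ (its formula (3.5)), which is exactly your toric valuation $v_r^{\log}=\frac{(n+1-r)P+rQ}{n+1}$ in disguise (the apparent sign flip in the $\hat k$-term is only a relabelling $r\leftrightarrow n+1-r$ and is washed out by the symmetry of the admissible range of $\hat k$). Your part (b) is complete and correct, and in fact slightly cleaner than the paper's: the inequality $(n+1-r)P+rQ=(P+Q)+(n-r)P+(r-1)Q\ge P+Q$ for $P,Q\ge 0$ disposes of all $r$ at once, where the paper reduces to $r=1,n$ and compares $\frac{i+m}{n+1}$ with $\frac{i-m}{n-1}$. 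The monomial-by-monomial reasoning (no cancellation of leading polar parts, extension across the codimension-two strata) is also how the paper argues.

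The genuine gap is in part (a): you correctly reduce $d_r$ to the lattice minimum
\begin{align*}
d_r=\min\left\{\tfrac{1}{2}\bigl(N+(n+1-2r)k\bigr)\ :\ N\ge m,\ |(n+1)k|\le N,\ N\equiv (n+1)k \bmod 2\right\},
\end{align*}
but you do not evaluate it, and the route you propose for doing so would not work as described. The weight $\alpha_{n,m,r}({\bf x})=\max\{0,m-v_{r+1}^{\log}\}$ of \hyperref[thm:1]{Theorem 1} is a \emph{per-graded-piece codimension} (how far a single block falls short of regularity along one $E_r$), and $h_{n,m}$ sums these; the coefficient $d_r$ is instead a \emph{minimum of the valuation $v_r^{\log}$ over all admissible graded pieces}. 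These are different quantities, and the telescoping expression $\sum_{j=0}^{\min(r-1,n-r)}\ceil{\frac{m-2j}{n+1}}$ arises as the closed form of the constrained minimum (one tracks how far $k$ can be pushed toward $-N/(n+1)$, respecting the parity constraint, as $N$ ranges over $[m,\infty)$), not as a sum of per-curve obstruction counts over blocks. To be fair, the paper also asserts the closed form of $a_r$ without writing out this bookkeeping, so you are not worse off than the source; but as a proof your part (a) stops one elementary, and somewhat fiddly, number-theoretic step short, and the pointer you give for completing it points at the wrong object.
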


\ 

The second author would like to thank Fedor Bogomolov and Margaridade Mendes Lopes for comments on this work and the faculty and staff of the Center for Mathematical Analysis, Geometry and Dynamical Systems of the University of Lisbon for the support on his stay at Center.

\newpage

\section{Background}
\label{sec:a}

\ 

\

%Let $Y$ be an orbifold surface (i.e. with only quotient singularities) of general type and $X$ be its minimal resolution with resolution morphism $\sigma:X\to Y$. In this section we describe succinctly the invariants of orbifold surface singularities involved in determining the presence of symmetric differentials on $X$ with an emphasis on the asymptotics $\lim_{m\to \infty}\frac{h^0(X,S^m\Omega^1_X)}{m^3}$ (for more details see section 1 of part I \cite{ADOWI}).
Let $Y$ be an orbifold surface (i.e. with only quotient singularities) of general type and $\sigma:X\to Y$ its minimal resolution. In this section we succinctly describe the invariants of orbifold surface singularities involved in determining the presence of symmetric differentials on $X$ with an emphasis on the asymptotics $\lim_{m\to \infty}\frac{h^0(X,S^m\Omega^1_X)}{m^3}$ (for more details see Section 1 of Part I of this work \cite{ADOWI}).

\ 

Riemann-Roch gives:

\begin{align*}
    h^0(X,S^m\Omega^1_X)=\int_X \text{ch}({S^m\Omega^1_X})\text{td}(X)+h^1(X,S^m\Omega^1_X)-h^2(X,S^m\Omega^1_X) \tag {1.1}
\end{align*}

\ 

The first term $\int_X \text{ch}({S^m\Omega^1_X})\text{td}(X)$ is a polynomial of degree 3 in $m$ with the coefficients involving the Chern classes of $X$. The term $h^2(X,S^m\Omega^1_X)$ vanishes for $m\ge 3$  due to Bogomolov's vanishing, \cite {bogomolov_stability}. The term $h^1(X,S^m\Omega^1_X)$ has the following decomposition (for further details see proof of Theorem 2 in Part I \cite{ADOWI}):

\begin{align*}
    h^1(X,S^m\Omega^1_X)=Lh^1(X,S^m\Omega^1_X)+NLh^1(X,S^m\Omega^1_X) \tag {1.2}
\end{align*}

\noindent where the localized component (at the singularities):

\begin{align*}
  Lh^1(X,S^m\Omega^1_X):=\sum_{y \in \text{Sing}(Y)}h^1(y,S^m\Omega^1_X) \tag {1.3}  
\end{align*}

\noindent with $h^1(y,S^m\Omega^1_X):=h^0(U_y,R^1\sigma_*S^m\Omega^1_{\tilde X})$, where $U_y$ is an affine neighborhood with $U_y\cap \text{Sing}(Y)=\{y\}$ and the non-localized component $NLh^1(X,S^m\Omega^1_X):=h^1(Y,\sigma_*S^m\Omega^1_X)$.

\ 

The QS-bigness criterion (Theorem 2 of Part I) gives the criterion for the bigness of the cotangent bundle of $X$ ($\lim_{m\to \infty}\frac{h^0 (X,S^m\Omega^1_X)}{m^3}\neq 0$):

\begin{align*}
     \sum_{y \in \operatorname{Sing}(Y)} \liminf_{m\to \infty}\frac{h^1(y,S^m\Omega^1_X)}{m^3} + \frac{s_2(X)}{3!} >0 \implies \Omega^1_X \text {  big} \tag{1.4}
\end{align*}

%\noindent where $\frac{s_2(X)}{3!}$ ($s_2(X)=c^2_1(X)-c_2(X)$) is coefficient of the cubic and leading term in $m$ of $\int_X \text{ch}({S^m\Omega^1_X})\text{td}(X)$.
\noindent where $\frac{s_2(X)}{3!} = \frac{c^2_1(X)-c_2(X)}{6}$ is the cubic coefficient and the leading term in $m$ of $\int_X \text{ch}({S^m\Omega^1_X})\text{td}(X)$.

\ 

The main goal of this work is to provide a method to find $h^1(y,S^m\Omega^1_X)$ and determine its asymptotics, $\lim_{m\to \infty}\frac{h^1(y,S^m\Omega^1_X)}{m^3}$,  when $y$ is an $A_n$ singularity (we will see that this limit exists). We use an indirect approach coming from the theory developed by Wahl \cite{wahl_chernclasses}, Blache \cite{blache} and Langer \cite{langer} concerning orbifold vector bundles, their Chern classes, and Riemann-Roch formulas (with their asymptotics). 

\

\noindent \textbf{Notation}. Hereon,  $h^1(y,m):=h^1(y,S^m\Omega^1_X)$ and $h^1(A_n,m):=h^1(y,m)$ where $y$ is the singularity $A_n$.

\

For further background on the general theory concerning what follows, see Section 1 of Part I \cite {ADOWI} (or see \cite{blache}, \cite{langer}, \cite{wahl_chernclasses}).

\

The difference between the Euler charateristics of the vector bundle $S^m\Omega^1_X$ on $X$ and  of the orbifold vector bundle $\hat S^m\Omega^1_Y:=(\sigma_*\Omega^1_X)^{\vee\vee}$ on $Y$ is measured by the sum over all the singular points $y$ of $Y$ of the local Euler characteristic of $\hat S^m\Omega^1_Y$ at $y$:

\begin{align*}
    \chi (y, m):=\chi (y, S^m\Omega^1_X) := \hslash^0(y,m)+h^1(y,m) \tag {1.5}
\end{align*}

\begin{align*}
			\hslash^0(y,m)&:=\dim [H^0(\tilde{U_y}\setminus E, S^m\Omega_X^1)/H^0(\tilde{U_y},S^m \Omega_{{X}}^1)] \tag {1.6}
		\end{align*}

\ 

The relations between the Euler characteristic of $\hat S^m\Omega^1_Y$ on $Y$, the orbifold Euler characteristic of $\hat S^m\Omega^1_Y$ on $Y$, and the orbifold Euler characteristic of $S^m\Omega^1_X$ on $X$, where the orbifold Euler characteristic of an orbifold vector bundle ${\mathcal{F}}$ on a orbifold $Z$ is $\chi_{\operatorname{orb}}(Z, {\mathcal{F}}):=\int_Z \text{ch}_{\operatorname{orb}}({\mathcal{F}})\text{td}_{\operatorname{orb}}(Z)$ (involving orbifold Chern classes) give:

\begin{equation} h^1(y,m)=
			\mu (y, m)- \chi_{\operatorname{orb}} (y, m)-\hslash^0(y,m)\tag {1.7}
		\end{equation}

  \ 

  \noindent with:

  \begin{align}\mu(y,m):= \frac{1}{|G_y|} \sum_{g \in G_y\setminus\{\text{Id}\}} \frac{\text{Tr}(\rho_{\hat S^m\Omega_{{Y}}^1}(g))}{\det (\text{Id} - g)} \tag {1.8}
		\end{align}

	\begin{equation}
			\chi_\text{orb} (y,m) := \frac{s_2(y)}{3!} m^3 - \frac {1}{2}c_2(y)m^2-\frac {c_1^2(y)+3c_2(y)}{12}m+\frac {c_1^2(y)+c_2(y)}{12}\tag{1.9}
		\end{equation}

	\noindent where:

 \
 
 \noindent i) $\mu(y,m)$ is the contribution that the singularity $y$ gives to the discrepancy  between the Euler characteristic of $\hat S^m\Omega^1_Y$ and the orbifold Euler characteristic of $\hat S^m\Omega^1_Y$ on $Y$. $G_y\subset GL(2,\mathbb {C})$ is the local fundamental group and $\rho_{\hat S^m\Omega_{{Y}}^1}$ the representation of $G_y$ associated to the orbifold vector bundle $\hat S^m\Omega_{{Y}}^1$ (see \cite{blache} 2.6 for the bijective association of isomorphism classes of representations of $G_y$ to isomorphism classes of germs of orbifold vector bundles  at the quotient singularity with local fundamental group $G_y$).

 \
 
 \noindent ii)  $\chi_\text{orb} (y,m)$ is the contribution that the singularity $y$ gives to the discrepancy  between the orbifold Euler characteristics of $S^m\Omega^1_X$ on $X$ and  of $\hat S^m\Omega^1_Y$ on $Y$.
	$$c^2_1 (y):=c_1^2(y,T_{{X}})=c_1^2(y, \Omega_{{X}}^1)\in \QQ,$$  $$c_2(y):=c_2(y,T_{{X}})=c_2(y, \Omega_{{X}}^1)\in \QQ,$$ $$s_2(y)=c_1^2(y)-c_2(y)$$

\noindent are local Chern numbers at the singularity $y$. They are obtained from the local  Chern classes $c_i(y,T_X)\in H^{2i}_{\text{dRc}}((\tilde U_y,E),\QQ)$, $\tilde U_y=\sigma^{-1}(U_y)\subset X$ and dRc stands for de Rham cohomology with compact support (\cite[\S3]{blache}).

 \newpage

\section{The invariants $\hslash^0(y,m)$ and their asymptotics for $A_n$ singularities}

\

\subsection{ $A_n$ model}

\ 

The affine model for the $A_n$ singularity that will be used is $X=\{xz-y^{n+1}=0\} \subset \mathbb{C}^3$. The affine surface $X$ is the quotient space obtained from $\mathbb{C}^2$ via the diagonal action of $\mathbb {Z}_{n+1}$ coming from the representation $\rho: \mathbb {Z}_{n+1}=\langle\tau\rangle \to SL(2,\mathbb {C})$ with:

\begin{align}
\rho(\tau)=\left [
\begin{array}{cl} \epsilon&0\\0&\epsilon^n\end{array}\right ] \tag {2.1}    
\end{align}

\smallskip

\noindent where $\epsilon$ is a $n+1$-primitive root of unity.

\ 

The standard smoothing of ${X}$, $\pi:\mathbb{C}^2 \to X$, is given by $\pi(z_1,z_2)=(z_1^{n+1},z_1z_2,z_2^{n+1})$. Let $\sigma:\tilde X \to X$ be the minimal good resolution of $X$, which can be obtained via successive blow ups of the ambient space $\mathbb{C}^3$ at points infinitesimally near the origin ($n$ blow ups for $n$ odd and $n-1$ for $n$ even). Let $\varphi:\mathbb{C}^2 \dashrightarrow \tilde X$ be the $n+1$ to 1 rational map $\sigma^{-1}\circ \pi$, whose indeterminacy locus is $\text {Ind}(\varphi)=\{(0,0)\}$. 

\

\ 

 The resolution $\tilde{X}$ has a covering consisting of $n+1$ open sets $U_r$, $r=0,...,n$, isomorphic to the affine plane. The  isomorphisms $\phi_r:\CC^2 \to U_r$,   with $u_{1}$ and $u_{2}$ coordinaters for $\CC^2$, can be chosen such that the following diagram holds: 

	\begin{center}
		\begin{tikzpicture}
			\begin{scope}
				\node (tC) at (-6.7, 1.5) {$\CC^2$};
				\node (C) at (-12.8,0) {$\CC^2 $};
				\node (U) at (-11,0) { $U_r\subset$};
				\node (tX) at (-10.3,0) {$\tilde{X}$};
				\node (X) at (-6.7, 0) {$X = \{xz - y^{n+1} = 0\} \subset \CC^3$};
				
				\draw[->, to path={-| (\tikztotarget)}] (C)  -- node[above] {$\phi_r$}   (U);
				\draw[->, to path={-| (\tikztotarget)}] (tX) -- node[above] {$\sigma$} (X);
				\draw[->, to path={-| (\tikztotarget)}] (tC) -- node[right] {$\pi$,$(z_1^{n+1},z_1z_2,z_2^{n+1})$} (X);
				\draw[dashed, ->,  to path={-| (\tikztotarget)}] (tC) to [out=160, in = 45] node[above left] {$\varphi_r$} (C);
				\draw[dashed, ->,  to path={-| (\tikztotarget)}] (tC) to [out=200, in = 40] node[above left] {$\varphi$} (tX);
			\end{scope}
		\end{tikzpicture}
	\end{center}

\noindent 	and
	
		\begin{align*}
			\varphi_r^*u_{1}=z_1^{n+1-r}z_2^{-r} \hspace {.6in} \varphi_r^*u_{2}=z_1^{r-n}z_2^{r+1}\tag {2.2}
		\end{align*}
		
		\

\noindent The exceptional locus of $\sigma$, $E=E_1+...+E_n$, is a sum of $n$ $(-2)$-curves that intersect transversally with intersection properties given by the $A_n$-Dykin diagram. The relation between the open covering $\{U_r\}$ with coordinates $u_{i,r}=u_i\circ \phi_r^{-1}$, $i=1,2$, and the exceptional set is given by:

\vspace {-.1in}

$$ \hspace {.4in} E_j\subset U_{j-1}\cup U_j \hspace {1in} E_j\cap U_r=\emptyset \hspace {.1in}\text {if} \hspace {.1in}r\not=j-1,j$$

\vspace {-.2in}

\begin{align*}E_j\cap U_{j-1}=\{u_{1,j-1}=0\} \hspace {.5in} E_j\cap U_{j}=\{u_{2,j}=0\} \tag {2.3}\end{align*}

\smallskip

Also relevant is the extension of the exceptional set $\hat E=E\cup E_0\cup E_{n+1}$, where $E_0=\{u_{2,0}=0\}\subset U_0$ and $E_{n+1}=\{u_{1,n}=0\}\subset U_n$. The following holds for all $r=0,...,n$:

\vspace {-.1in}

\begin{align*} \tilde X\setminus \hat E=U_r\setminus \{u_{1,r}u_{2,r}=0\} \tag {2.4}\end{align*}

\

\noindent {\bf Note}: To obtain uniformity in the formulas ahead we also consider $r=-1$ and $r=n+1$, where we define $\varphi_r:\CC^2 \to \CC^2$ for $r=-1,n+1$ using the same formulas as in (2.2).

\

The following diagram coming, via restrictions, from diagram (2.2) is also relevant:

\begin{center}
		\begin{tikzpicture}
			\begin{scope}
				\node (tC) at (-0.7, 2) {$\CC^*\times \CC^*$};
				\node (C) at (-4.7,0) {$\CC^*\times \CC^*$};
				\node (X) at (-0.7, 0) {$X^* := (\CC^*\times \CC^*)/\mathbb{Z}_{n+1}$};\node (XXX) at (-8.3, 0) {};
				\node (XX) at (3.85, 0) {(2.5)};
				\draw[->, to path={-| (\tikztotarget)}] (C)  -- node[above] {$\sigma \circ \phi_r$}   (X);
				\draw[->, to path={-| (\tikztotarget)}] (tC) -- node[right] {$\pi$} (X);
				\draw[->,  to path={-| (\tikztotarget)}] (tC) -- node[above] {$\varphi_r$} (C);
			\end{scope} 
		\end{tikzpicture}
	\end{center}

\newpage

\subsection{The 3-gradation of the algebra of symmetric differentials $S(\CC^*\times \CC^*)$  }

\

\subsubsection{Block partition}\label{sec:block-partition}

\

\

The algebra of regular symmetric differentials on $\CC^*\times \CC^*\subset \CC^2$:

\vspace {-.1in}

$$S(\CC^*\times \CC^*)=\bigoplus_{m=0}^\infty H^0(\CC^*\times \CC^*,S^m\Omega_{\CC^2}^1)=\mathbb{C}[z_1,z_1^{-1},z_2,z_2^{-1},dz_1,dz_2]$$

\noindent has the natural bi-gradation with bi-graded pieces:

\vspace {-.1in}

$$S(\CC^*\times \CC^*)_{(i,m)}=H^0(\CC^*\times \CC^*,S^m\Omega^1_{\CC^2})_{(i)}:=\mathbb{C}[z_1,z_1^{-1},z_2,z_2^{-1}]_{(i)}\otimes\mathbb{C}[dz_1,dz_2]_{(m)}$$

\noindent with $i\in \mathbb{Z}$ and $m\in \mathbb{Z}_{\ge 0}$. Also relevant to us is the subalgebra $S(\CC^2)=\mathbb{C}[z_1,z_2,dz_1,dz_2]$ of $S(\CC^*\times \CC^*)$ with induced bi-gradation with bi-graded pieces $H^0( \CC^2,S^m\Omega_{\CC^2}^1)_{(i,m)}$, $i,m\in \mathbb{Z}_{\ge 0}$.

\

A symmetric differential $w\in S(\CC^*\times \CC^*)_{(i,m)}$ is said to be bi-homogeneous of $order$ $i$ and $degree$ $m$. The symmetric differentials $z_1^{i_1}z_2^{i_2}(dz_1)^{m_1}(dz_2)^{m_2}$ will be  called monomials. We have:

\vspace {-.1in}

\begin{align*}
z_1^{i_1}z_2^{i_2}(dz_1)^{m_1}(dz_2)^{m_2} \text { 
 is } \mathbb{Z}_{n+1}\text {-invariant} \iff	i_1+m_1+n(i_2+m_2)\stackrel{\mathclap{\normalfont\mbox{\tiny{n+1}}}}{\equiv} 0  \tag {2.6}
		\end{align*}

\

\noindent Denote by $S(\CC^*\times \CC^*)^{\mathbb{Z}_{n+1}}$ the subalgebra of $\mathbb{Z}_{n+1}$-invariant differentials. 

\

From diagram (2.5) one obtains for each $r=0,...,n$:  

\begin{center}
		\begin{tikzpicture}
			\begin{scope}
				\node (tC) at (-0.7, 2.5) {$S(\CC^*\times \CC^*)^{\mathbb{Z}_{n+1}}$};
				\node (C) at (-4.7,0) {$S(\CC^*\times \CC^*)$};
				\node (X) at (-0.7, 0) {$S(X^*)$};\node (XXX) at (-8.3, 0) {};
				\node (XX) at (3.85, 0) {(2.7)};
				\draw[->, to path={-| (\tikztotarget)}] (X)  -- node[above] {$
				(\sigma \circ \phi_r)^*$}  node [below] {$\cong$} (C);
				\draw[->, to path={-| (\tikztotarget)}] (X) -- node[ right] {$\pi^*$}node[ left] {$\cong$} (tC);
				\draw[->,  to path={-| (\tikztotarget)}] (C) -- node[above] {$\varphi_r^*$}node[ below] {$\cong$} (tC);
			\end{scope} 
		\end{tikzpicture}
	\end{center}

\ 

The goal is to introduce a 3-gradation of $S(\CC^*\times \CC^*)$, with graded pieces spanned by symmetric differential monomials, that is respected by the isomorphisms $\varphi^*_r$. The structure of the pullback of symmetric differential monomials under the isomorphisms $\varphi_r^*:S(\CC^*\times \CC^*) \to S(\CC^*\times \CC^*)^{\mathbb{Z}_{n+1}}$ is the motivation for our choice of 3-gradations appearing below.

\

 The pullback by $\varphi_r$ of a monomial of degree $m$, using (2.2), is of the form:

		\begin{align*}
			\varphi_r^*(u_1^{i_{1}}u_2^{i_{2}}(du_1)^{m-q}(du_2)^q
				)= \sum_{l=0}^m c_{ql}(r)z_1^{j_1(i_{1},i_{2},m,q,r)+l}z_2^{j_2(i_{1},i_{2},m,q,r)-l}(dz_1)^{m-l}(dz_2)^l\tag{2.8}\label{eq:cql}
		\end{align*}

where 
			\begin{align*}
				j_1(i_{1},i_{2},m,q,r)&=(n+1-r)i_{1}+(r-n)i_{2}+(n-r)m+(2r-2n-1)q\\
				j_2(i_{1},i_{2},m,q,r)&=(-r)i_1+(r+1)i_2+(-r)m+(2r+1)q,
			\tag {2.9}\end{align*}
		
		 and the coefficients $c_{ql}(r)$ are determined by the equality:
		 \begin{align*}
		 	\left[(n+1-r)X-rY\right]^{m-q}\left [(r-n)X+(r+1)Y\right]^q = \sum_{l=0}^{m}c_{ql}(r)X^{m-l}Y^l.
		 \tag {2.10}\end{align*}

\ 

\noindent (all monomials on the right side of (2.8) are $\mathbb{Z}_{n+1}$-invariant).

\

The expression (2.8) jointly with (2.9) motivates the partition of the set of all monomials of degree $m$, say in the coordinates $(z_1,z_2)$, into the following $blocks$ of $m+1$ monomials:

%\begin{align*}
%    B_{k,i,m}:=\left\{
%z_1^{i-k+l}z_2^{k-l}(dz_1)^{m-l}(z_2)^l)\right\}_{l=0,\dots,m},\tag {2.11}
%\end{align*}
\begin{align*}
    B_{k,i,m}:=\left\{
    z_1^{i-k+l}z_2^{k-l}(dz_1)^{m-l}(dz_2)^l)\right\}_{l=0,\dots,m},\tag {2.11}
\end{align*}

	\noindent where $i,k\in \mathbb{Z}$. The monomials in a single block are either   all $\mathbb{Z}_{n+1}$-invariant or all not:
	
	\begin{align*}
    B_{k,i,m}\cap S(\CC^*\times \CC^*)^{\mathbb{Z}_{n+1}} =
\begin{cases}
B_{k,i,m}, & \hspace {.05in}\text {if } 2k {\equiv} i+m \bmod{n+1}\\ 
\varnothing, & \hspace {.05in} \text {otherwise.}
 \end{cases}
\end{align*}
	
\

 When dealing with blocks of $\mathbb{Z}_{n+1}$-invariant monomials a distinct indexing will be useful. The $\mathbb{Z}_{n+1}$-invariance condition corresponds to $\hat k=\frac{2k-i-m}{n+1}\in \mathbb{Z}$. Set:

 \begin{align*}
			{\hat B}_{{\hat k},i,m}:=B_{ k({\hat k}),i,m}
   \hspace {.4in}
k(\hat k)=\frac{i+m}{2}+\frac{n+1}{2}\hat k \tag {2.12}
\end{align*}

\

\noindent The permissible indices $(\hat k,i,m)$ satisfy:

\begin{align*}
(n+1)\hat k\equiv i+m \bmod{2} \tag {2.13}
\end{align*}
\
Condition (2.13) has a dichotomy:

\begin{align*}
  \noindent  
(n+1)\hat k\equiv i+m \bmod{2}\iff \begin{cases}
 i+m\equiv 0 \bmod{2}& \hspace {.05in} n \text { odd}\\ 
 \hat k\equiv i+m \bmod{2}& \hspace {.05in} n \text { even}
 \end{cases}
\end{align*}

 \

 \subsubsection{3-gradations for $S(\CC^*\times \CC^*)$ and  $S(\CC^2)^{\mathbb{Z}_{n+1}}$}

 \

 \
 
 Set the 3-gradation of $S(\CC^*\times \CC^*)$ to be:

\begin {align*}
S(\CC^*\times \CC^*)=
\bigoplus_{\substack{ m\in \mathbb{Z}_{\ge 0}\\ i,k\in \mathbb{Z}}} V_{k,i,m}\tag {2.14}
\end{align*}

\noindent where the graded pieces are:

$$V_{k,i,m}:=\text{Span}(B_{k,i,m})$$  

\vspace {.2in}

\noindent ($V_{k,i,m}V_{k',i',m'}\subset V_{k+k',i+i',m+m'}$).

\ 

We have two variants of a 3-gradation of $S(\CC^*\times \CC^*)^{\mathbb{Z}_{n+1}}$:

\begin {align*}
S(\CC^*\times \CC^*)^{\mathbb{Z}_{n+1}}=\bigoplus_{\substack{ m\in \mathbb{Z}_{\ge 0}\\ i,k\in \mathbb{Z}\\ 2k- i-m\equiv 0 \bmod{n+1}}} V_{k,i,m}\tag {2.15}
\end{align*}

\noindent and the other using $\hat V_{\hat k,i,m}:=\text{Span}(\hat B_{\hat k,i,m})$ is:

\

\begin{align*}
 S(\CC^*\times \CC^*)^{\mathbb{Z}_{n+1}}=
 \bigoplus_{\substack{ m\in \mathbb{Z}_{\ge 0}\\ i,\hat k\in \mathbb{Z}\\(n+1)\hat k\equiv i+m \bmod{2}}} \hat V_{\hat k,i,m}
 \tag {2.16}
\end{align*}

\

For each $r=0,...,n$ the pullback morphisms $\varphi_r^*$ give the isomorphisms:

\begin{align*}S(\CC^*\times \CC^*) \xrightarrow[ ]{\varphi_r^*} S(\CC^*\times \CC^*)^{\mathbb{Z}_{n+1}}
\end{align*}  

\noindent that by (2.8) respects the 3-gradations. The relations (2.9) describe which graded pieces are sent to which graded pieces:

 \begin{align*}\varphi_r^*V_{k-r\left(\frac{2k-m-i}{n+1}\right), i+(n-2r)\left(\tiny{\frac{2k-m-i}{n+1}}\right), m}=V_{k, i, m}\end{align*}

 \ 

\noindent We will be interested in the following reformulation of the above:

\begin{align*}\varphi_r^*V_{\hat k, i, m, r}=\hat V_{\hat k, i, m}\end{align*}
\vspace{-.15in}
 \begin{align*}V_{\hat k, i, m, r}:=V_{\frac {i+m}{2}+(\frac{n+1}{2}-r)\hat{k}, i+(n-2r)\hat{k}, m}\tag{2.17}\label{eq:2.17}\end{align*}

 \
 
 The above gradation of $S(\CC^*\times \CC^*)$ induces a 3-gradation on $S(\CC^2)$ :
 
 \vspace {-.1in}
 
 \begin{align*}S(\CC^2)=\bigoplus_{\substack{ i,m\in \mathbb{Z}_{\ge 0}\\ 0\le k\le m+i}} V_{k,i,m}^{\text{reg}} 
				\tag {2.18}\end{align*}
 
 \vspace {-.05in}
 \noindent  The graded pieces are:
\vspace {-.05in}
 \begin{align*}V_{k,i,m}^{\text {reg}}:=V_{k,i,m}\cap H^0(\CC^2,S^m\Omega^1_{\CC^2})=\text{Span}(B_{k,i,m}^{\text{reg}})\end{align*}
	
\noindent where $B_{k,i,m}^{\text{reg}}:=\left\{z_1^{i_1}z_2^{i_2}dz_1^{m_1}dz_2^{m_2} \in B_{k,i,m}|i_1,i_2\ge 0\right\}.$ 

\

\noindent {\bf Remark:} The condition $0\le k\le m+i$ is equivalent to $B_{k,i,m}^{\text{reg}}\not =\emptyset$. 

\ 

We will also use with $j=1,2$:

$$B_{k,i,m}^{\text{reg},j}:=\left\{z_1^{i_1}z_2^{i_2}dz_1^{m_1}dz_2^{m_2} \in B_{k,i,m}\mid i_j\ge 0\right\}$$

\ 

For the algebra  $S(\CC^2)^{\mathbb{Z}_{n+1}}$ we will consider the 3-gradation:

\begin{align*}
 S(\CC^2)^{\mathbb{Z}_{n+1}}=
 \bigoplus_{\substack{ i,m\in \mathbb{Z}_{\ge 0}\\ |\hat k|\le \frac {i+m}{n+1}\\(n+1)\hat k\equiv i+m  \bmod{2}}} \hat V^\text{reg}_{\hat k,i,m}\tag {2.19}
\end{align*}

\noindent The condition $|\hat k|\le \frac {i+m}{n+1}$  is equivalent to $\hat B_{\hat k,i,m}^{\text{reg}}\not =\emptyset$.

\	

\

\subsection{Finding $\hslash^0(A_n,m)$ and its asymptotics}

\ 

\ 

 In this section we derive, for any fixed $n$, a formula for the function (notation of section 2.1):

\begin{align*}\hslash^0(A_n,m)=\dim \left( H^0(\tilde {X}\setminus E,S^m\Omega^1_{\tilde {X}})\middle/ H^0(\tilde {X},S^m\Omega^1_{\tilde {X}})\right ) 
\end{align*}

\noindent  giving the dimension of the space of obstructions for $m$-differentials on $\tilde X \setminus E$ to extend holomorphically through $E$. We will observe that $\hslash^0(A_n,m)$ is a quasi-polynomial of degree 3 in $m$. The main purpose is to determine the leading asymptotics:

\begin{align*}\hslash^0_\Omega(A_n)=\lim_{m\to \infty}\frac{\hslash^0(A_n,m)}{m^3}
\end{align*}

\

\subsubsection{From $\tilde X$ to $\CC^2$}

\ 

\

In this section, $\tilde {X}$ is the minimal resolution of the affine model $X$ of the $A_n$ singularity as described in section 2.1. We have the commutative diagram involving the resolution $\sigma$ and the smoothing $\pi$ of $X$:

\begin{equation*}
\begin{tikzcd}
 & (\CC^2, 0) \arrow[dl, "\varphi"', dashed] \arrow[d, "\pi"] \\
(\tilde{X}, E) \arrow[r, "\sigma"]  & (X, x)  \\
\end{tikzcd}
\end{equation*}

The map $\varphi$ induces the isomorphisms between the algebras of symmetric differentials:

\begin{align*} S\left(\tilde {X}\setminus E\right) \xrightarrow[\cong] {\varphi^*} S\left(\CC^2\right)^{\mathbb{Z}_{n+1}}   \text{ , } \text { } S\left(\tilde {X}\setminus \hat E\right) \xrightarrow[\cong] {\varphi^*} S\left(\CC^*\times \CC^*\right)^{\mathbb{Z}_{n+1}}
    \end{align*}

\noindent (recall: $\hat E=E+E_0+E_{n+1}$ as in 2.1). The first isomorphism follows from 
$$H^0\left(\tilde {X}\setminus E,S^m\Omega^1_{\tilde {X}}\right)\xleftarrow[\cong] {\sigma^*} H^0\left({X}\setminus \{x\},S^m\Omega^1_{ {X}}\right) \xrightarrow[\cong] {\pi^*} H^0\left(\CC^2\setminus \{0\},S^m\Omega_{\CC^2}^1\right)^{\mathbb{Z}_{n+1}}$$  
plus the equality 
$$H^0\left(\CC^2,S^m\Omega_{\CC^2}^1\right)^{\mathbb{Z}_{n+1}}=H^0\left(\CC^2\setminus \{0\},S^m\Omega_{\CC^2}^1\right)^{\mathbb{Z}_{n+1}}$$
due to the reflexivity of the sheaf $S^m\Omega_{\CC^2}^1$. The second isomorphism follows from similar arguments.

\

We make use of the following diagram:

\

\hspace {-.3in}\begin{tikzcd}
 H^0(\tilde {X}\setminus \hat E,S^m\Omega^1_{\tilde {X}})\arrow[rightarrow]{r}{\varphi^*}[swap]{\cong}
  & H^0(\CC^*\times \CC^*, S^m\Omega^1_{\CC^2})^{^{\mathbb{Z}_{n+1}}} \arrow[leftarrow]{r}{\varphi_r^*}[swap]{\cong}\arrow[leftarrow]{rd}[swap]{\varphi_r^*}&  H^0(\CC^*\times \CC^*,S^m\Omega^1_{\CC^2})
   \\
 H^0(\tilde {X}\setminus E,S^m\Omega^1_{\tilde {X}})\arrow[hookrightarrow]{u} \arrow[rightarrow]{r}{\varphi^*}[swap]{\cong} 
  & H^0(\CC^2, S^m\Omega^1_{\CC^2})^{^{\mathbb{Z}_{n+1}}} \arrow[hookrightarrow]{u} & H^0(\CC^2, S^m\Omega^1_{\CC^2}) \arrow[hookrightarrow]{u}
  \end{tikzcd}

  \

  \
  
  \noindent  to turn the task of finding $\hslash^0(A_n,m)$ from the spaces $\tilde X$ and $\tilde X\setminus E$ to $\CC^2$, via:

\begin{align*}\hslash^0(A_n,m)=\dim \left(H^0(\CC^2, S^m\Omega^1_{\CC^2})^{^{\mathbb{Z}_{n+1}}}\middle/ \bigcap_{r=0}^n \varphi_r^* H^0(\CC^2, S^m\Omega^1_{\CC^2})\right ) \tag {2.20}
\end{align*}

\noindent The above equality holds since a differential $w\in H^0(\tilde {X}\setminus E,S^m\Omega^1_{\tilde {X}})$ has no poles along $E_r$ and $E_{r+1}$ if and only if $\varphi^*w\in \varphi_r^* H^0(\CC^2, S^m\Omega^1_{\CC^2})$. 

\ 

Notice that whilst for each $r$, $\varphi_r^* H^0(\CC^2, S^m\Omega^1_{\CC^2}) \not \subset H^0(\CC^2, S^m\Omega^1_{\CC^2}) $ (differentials will be $\ZZ_{n+1}-invariant$ but not necessarily holomorphic along $\{z_1z_2=0\}$), one has 

$$\bigcap_{r=0}^n \varphi_r^* H^0(\CC^2, S^m\Omega^1_{\CC^2})\subset H^0(\CC^2, S^m\Omega^1_{\CC^2})$$

\noindent since $w\in \varphi_0^* H^0(\CC^2, S^m\Omega^1_{\CC^2})\cap \varphi_n^* H^0(\CC^2, S^m\Omega^1_{\CC^2})$ is $w=\varphi^*\tilde w$ with $\tilde w \in H^0(\tilde {X}\setminus E,S^m\Omega^1_{\tilde {X}})$.

\

\subsubsection{3-gradation breakdown of $\hslash^0(A_n,m)$ }

\ 

\

The 3-gradation breakdown of $\hslash^0(A_n,m)$ follows from the the pullback mappings, $\varphi_r^*$, sending graded piece to graded piece of the 3-gradation of $H^0(\CC^*\times \CC^*, S^m\Omega^1_{\CC^2})$ given in (2.14).

\  

Let $w\in  H^0(\CC^2, S^m\Omega^1_{\CC^2})^{^{\mathbb{Z}_{n+1}}}$. Since no monomial belongs to two blocks that generate two distinct graded pieces, the following are equivalent: 

\begin{enumerate}
    \item [i)] $(\varphi^*)^{-1} w$ extends regularly along  $E$ 
    \item [ii)] $(\varphi^*)^{-1} w_{\hat k,i,m}$ extends regularly along  $E$, $\forall$ $w_{\hat k,i,m}$ in the $(\hat k,i,m)$-decomposition of $w$.
\end{enumerate}

Also relevant to this equivalence and what will follow is the decomposition  for each $r=0,...,n$:

$$\text { }\text { }\text { }\text { }\text { }\text { }\text { }\text { }\text { }\text { }\text { }(\varphi_r^*)^{-1}w_{\hat k,i,m}=\sum w_{\hat k,i,m,r} \text { },  \text { }\text { } \text { }\text { }\text { } w_{\hat k,i,m,r}\in V_{\hat k,i,m,r}$$

 \ 

 \noindent with this decomposition, the following statements are equivalent to $(\varphi^*)^{-1}w_{\hat k,i,m}$ extending regularly along  $E$:

\begin{enumerate}
     
    \item [a)] $w_{\hat k,i,m,r}\in V^\text {reg}_{\hat k,i,m,r}$, for $r=0,...,n$
    \item [b)] $w_{\hat k,i,m,r}\in V^\text {reg,1}_{\hat k,i,m,r}$, for $r=-1,...,n$
    \item [c)] $w_{\hat k,i,m}\in \bigcap_{r=-1}^n\varphi_r^*V_{\hat k,i,m,r}^{\operatorname{reg},1}$
\end{enumerate}

\noindent See the note below (2.4) for the meaning of the case $r=-1$. The artificial case of $r=-1$ in (b) and (c) will be used to streamline formulas. We observe that $\varphi_{r-1}^*V^\text {reg,1}_{\hat k,i,m,r-1}=\varphi^*_rV^\text {reg,2}_{\hat k,i,m,r}$, for $r=0,...,n+1$.

\

\begin{proposition} Let $(\tilde X,E)$ be the germ of the minimal resolution of $(X,x)$  the germ of an $A_n$ singularity. Then:

\begin{align*}\hslash^0(A_n,m)= \sum_{\substack{ 0\le i\le mn-1\\ |\hat k|\le \frac {i+m}{n+1}\\(n+1)\hat k\equiv i+m  \bmod{2}}} \hslash^0(A_n,\hat k,i,m)\tag {2.21}\end{align*}

\noindent where $\hslash^0(A_n,\hat k,i,m):=\dim \left(\frac{\hat V_{\hat k,i,m}^{\text{reg}}}{\bigcap_{r=-1}^n\varphi_r^*V_{\hat k,i,m,r}^{\operatorname{reg},1}}\right)$.

\end{proposition}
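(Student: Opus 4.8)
The plan is to prove that the subspace appearing in the denominator of (2.20) is homogeneous for the 3-gradation of the numerator, so that the quotient computing $\hslash^0(A_n,m)$ decomposes as a direct sum over graded pieces whose dimensions are precisely the local quantities $\hslash^0(A_n,\hat k,i,m)$. The first step is to record the decomposition of the numerator: for fixed $m$, the reflexivity identification together with the 3-gradation (2.19) gives
\[
H^0(\CC^2, S^m\Omega^1_{\CC^2})^{\ZZ_{n+1}}=\bigoplus_{\substack{i\ge 0,\ \hat k\in\ZZ\\ |\hat k|\le \frac{i+m}{n+1}\\ (n+1)\hat k\equiv i+m \bmod 2}}\hat V^{\text{reg}}_{\hat k,i,m}.
\]

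The key step is to show that the denominator $\bigcap_{r=0}^n \varphi_r^* H^0(\CC^2,S^m\Omega^1_{\CC^2})$ is a homogeneous subspace. By the characterization recalled below (2.20), this subspace is exactly the set of $w\in H^0(\CC^2, S^m\Omega^1_{\CC^2})^{\ZZ_{n+1}}$ for which $(\varphi^*)^{-1}w$ extends regularly through $E$. The equivalence of conditions (i) and (ii) established above says this can be tested componentwise in the 3-gradation, the point being that each $\varphi_r^*$ carries graded pieces to graded pieces (by (2.8) and (2.17)) and that regularity is a condition on individual monomials, no monomial lying in two distinct graded pieces. Hence if $w$ lies in the denominator then so does every homogeneous component $w_{\hat k,i,m}$, which is precisely homogeneity. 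It follows that
\[
\bigcap_{r=0}^n \varphi_r^* H^0(\CC^2,S^m\Omega^1_{\CC^2}) = \bigoplus_{\hat k,i}\Big(\hat V^{\text{reg}}_{\hat k,i,m}\cap \bigcap_{r=0}^n \varphi_r^* H^0(\CC^2,S^m\Omega^1_{\CC^2})\Big).
\]

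Next I would identify each graded intersection. For $w_{\hat k,i,m}\in \hat V^{\text{reg}}_{\hat k,i,m}$, the equivalences (a), (b), (c), combined with the identity $\varphi_{r-1}^*V^{\text{reg},1}_{\hat k,i,m,r-1}=\varphi_r^*V^{\text{reg},2}_{\hat k,i,m,r}$ (which turns $u_1$-regularity at level $r-1$ into $u_2$-regularity at level $r$, so that $u_1$-regularity for $r=-1,\dots,n$ encodes full regularity on every chart $U_r$), yield
\[
\hat V^{\text{reg}}_{\hat k,i,m}\cap \bigcap_{r=0}^n\varphi_r^* H^0(\CC^2,S^m\Omega^1_{\CC^2}) = \bigcap_{r=-1}^n \varphi_r^* V^{\text{reg},1}_{\hat k,i,m,r},
\]
the containment in $\hat V^{\text{reg}}_{\hat k,i,m}$ being the inclusion $\bigcap_{r}\varphi_r^* H^0\subset H^0(\CC^2,S^m\Omega^1_{\CC^2})$ noted earlier. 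Since the quotient of a graded space by a graded subspace is graded, its dimension is the sum over $(\hat k,i)$ of $\dim\big(\hat V^{\text{reg}}_{\hat k,i,m}\big/\bigcap_{r=-1}^n \varphi_r^* V^{\text{reg},1}_{\hat k,i,m,r}\big)=\hslash^0(A_n,\hat k,i,m)$, which produces the asserted sum over the range of (2.19).

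Finally, to obtain the stated upper bound $i\le mn-1$ I would prove that the graded quotient vanishes once $i\ge mn$; that is, every $\ZZ_{n+1}$-invariant regular $m$-differential of order at least $mn$ already extends through $E$, so the bound $0\le i$ (from regularity) sharpens to $0\le i\le mn-1$. Using $\varphi_r^* u_{1,r}=z_1^{n+1-r}z_2^{-r}$ and $\varphi_r^* u_{2,r}=z_1^{r-n}z_2^{r+1}$, I would invert (2.2) to rewrite regularity on each chart $U_r$ as the nonnegativity of the two exponents obtained from the matrix of determinant $n+1$, and verify that a high enough order of vanishing forces both exponents to be nonnegative simultaneously for all $r$. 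I expect this order estimate, namely pinning down the sharp threshold $mn$ rather than merely some bound, to be the main obstacle; it is the same phenomenon later repackaged as the defining inequalities of the polygon $\mathcal P_n(m)$ in \hyperref[thm:1]{Theorem 1} and as the extension criterion of \hyperref[thm:3]{Theorem 3}(b). The grading-compatibility portion, by contrast, is essentially bookkeeping built on the equivalences already in place.
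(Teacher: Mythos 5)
Your proposal is correct and follows essentially the same route as the paper: decompose the numerator via the 3-gradation (2.19), use the equivalences i)$\iff$ii) and a)$\iff$b)$\iff$c) to show the denominator of (2.20) is a homogeneous subspace whose graded pieces are $\bigcap_{r=-1}^n\varphi_r^*V_{\hat k,i,m,r}^{\operatorname{reg},1}$, and then sum the dimensions of the graded quotients. Your treatment of the bound $i\le mn-1$ also matches the paper, which likewise defers it to the extension criterion of Theorem 3(b).
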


   \ 

   \

\noindent \textbf{Note}. The inclusion $\bigcap_{r=-1}^n\varphi_r^*V_{\hat k,i,m,r}^{\operatorname{reg},1}\subset \hat V_{\hat k,i,m}^{\text{reg}}$ holds since $\hat V_{\hat k,i,m}^{\text {reg}}=\varphi_{-1}^*V_{\hat k,i,m,-1}^{\operatorname{reg},1}\cap \varphi_n^*V_{\hat k,i,m,n}^{\operatorname{reg},1}$.

   \ 

  \begin{proof} The result follows from (2.20), the decomposition:
  
  $$H^0(\CC^2, S^m\Omega^1_{\CC^2})^{^{\mathbb{Z}_{n+1}}}=\bigoplus_{\substack{ i\in \mathbb{Z}_{\ge 0}\\ |\hat k|\le \frac {i+m}{n+1}\\(n+1)\hat k\equiv i+m  \bmod{2}}} \hat V^\text{reg}_{\hat k,i,m}$$
  
  \noindent and from the equivalences i) $\iff$ ii) and a) $\iff$  b) $\iff$ c) that imply:

  $$\bigcap_{r=0}^n \varphi_r^* H^0(\CC^2, S^m\Omega^1_{\CC^2})=\bigoplus_{\substack{ i\in \mathbb{Z}_{\ge 0}\\ |\hat k|\le \frac {i+m}{n+1}\\(n+1)\hat k\equiv i+m  \bmod{2}}} \bigcap_{r=-1}^n\varphi_r^*V_{\hat k,i,m,r}^{\operatorname{reg},1}$$

  For the upper bound $i\le mn-1$, see \hyperref[thm:3]{Theorem 3}(b).
 \end{proof}

\

\

\subsubsection{Independence of the conditions to have no poles along the $E_r$ and the formula for $h^0(A_n,\hat k,i,m)$}

\ 

\

It follows from Proposition 1  that  we need to get a handle on the subspaces $\varphi_r^*V_{\hat k,i,m,r}^{\text{reg},1} \subset \hat V_{\hat k,i,m}$, $r=-1,...,n$  and determine how they intersect each other.

\ 

We will show that the subspaces $\varphi_r^*V_{\hat 
 k,i,m,r}^{\text{reg},1}$ of $\hat V_{\hat k,i,m}$ are in general position, which implies that the dimension of their intersection  is the expected dimension. It also means that the conditions for a given differential not to have poles along the components $E_r$ are independent of each other.

\

\begin{lemma} [Independence of the conditions to have no poles along the $E_r$] The subspaces $\varphi_r^*V_{\hat k,i,m,r}^{\operatorname{reg,1}}\subset \hat V_{\hat k,i,m}$, $r=-1,...,n$, of $\hat V_{\hat k,i,m}$ are in general position.
\end{lemma}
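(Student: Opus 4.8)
The plan is to reduce the statement to the classical independence of vanishing conditions for binary forms at distinct points of $\PP^1$. The starting point is the explicit description of $\varphi_r^*$ on blocks furnished by (2.8)--(2.10). Fix the triple $(\hat k,i,m)$ and write $V_{\hat k,i,m,r}=V_{k',i',m}$ as in (\ref{eq:2.17}). I identify both the source block $V_{\hat k,i,m,r}$ and the target block $\hat V_{\hat k,i,m}$ with the space $S^m\CC^2$ of binary forms of degree $m$ in variables $X,Y$, by sending the $q$-th basis monomial of each block to $X^{m-q}Y^q$. Under these identifications, (2.8)--(2.10) say precisely that $\varphi_r^*$ becomes the symmetric power $S^m\Psi_r$ of the linear map $\Psi_r$ of $\CC^2$ with $\Psi_r(X)=P_r:=(n+1-r)X-rY$ and $\Psi_r(Y)=Q_r:=(r-n)X+(r+1)Y$; indeed the coefficients $c_{ql}(r)$ are exactly the matrix entries of $S^m\Psi_r$ in the monomial bases, and the exponent shifts $j_1,j_2$ of (2.9) are what make every $\varphi_r^*f_q$ land in the single target block $\hat V_{\hat k,i,m}$.

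Next I would identify the subspaces. The condition defining $V_{\hat k,i,m,r}^{\operatorname{reg},1}$, namely that the first-coordinate exponent $i'-k'+q$ be $\ge 0$, selects exactly the basis monomials with $q\ge s_r:=\max(0,k'-i')$; hence $V_{\hat k,i,m,r}^{\operatorname{reg},1}$ corresponds to $Y^{s_r}\cdot S^{m-s_r}\CC^2$, the forms divisible by $Y^{s_r}$. Applying $S^m\Psi_r$ and using $\Psi_r(Y)=Q_r$ gives
\[
A_r:=\varphi_r^*V_{\hat k,i,m,r}^{\operatorname{reg},1}=Q_r^{s_r}\cdot S^{m-s_r}\CC^2=\{\,h\in S^m\CC^2 : Q_r^{s_r}\mid h\,\},
\]
the degree-$m$ binary forms divisible by $Q_r^{s_r}$. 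A short computation from (\ref{eq:2.17}) yields $k'-i'=\tfrac{1}{2}\big(m-i+(2r-n+1)\hat k\big)$, so $s_r=\alpha_{n,m,r}$ and $\operatorname{codim}A_r=s_r$, in agreement with the weight function of \hyperref[thm:1]{Theorem 1}.

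The two facts that make general position work are elementary. First, $\det\Psi_r=(n+1-r)(r+1)+r(r-n)=n+1\neq 0$, so $\Psi_r$ is invertible and each $Q_r$ is a genuine nonzero linear form. Second, for $r\neq r'$ in $\{-1,\dots,n\}$ the forms $Q_r$ and $Q_{r'}$ are non-proportional: proportionality forces $(r-n)(r'+1)=(r+1)(r'-n)$, whose left-minus-right side is $(n+1)(r-r')$, so $r=r'$. Thus $Q_{-1},\dots,Q_n$ cut out $n+2$ distinct points of $\PP^1$; note in particular $Q_{-1}=-(n+1)X$ and $Q_n=(n+1)Y$, which matches $\hat V_{\hat k,i,m}^{\operatorname{reg}}=A_{-1}\cap A_n$ being the forms divisible by a power of $X$ and a power of $Y$.

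Finally I would conclude by the standard argument in the UFD $\CC[X,Y]$. Since the $Q_r$ are pairwise coprime, for every subset $S\subseteq\{-1,\dots,n\}$ a form is divisible by each $Q_r^{s_r}$, $r\in S$, iff it is divisible by $\prod_{r\in S}Q_r^{s_r}$; hence
\[
\bigcap_{r\in S}A_r=\Big(\prod_{r\in S}Q_r^{s_r}\Big)\cdot S^{\,m-\Sigma_S}\CC^2,\qquad \Sigma_S:=\sum_{r\in S}s_r,
\]
which has dimension $\max(0,m+1-\Sigma_S)$ (it is $0$ once $\Sigma_S>m$). Therefore $\operatorname{codim}\big(\bigcap_{r\in S}A_r\big)=\min\big(m+1,\sum_{r\in S}\operatorname{codim}A_r\big)$ for all $S$, which is exactly the assertion that the $A_r$ are in general position. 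The only real work is the bookkeeping of the first paragraph: tracking the $z$-monomial prefactors in (2.8) carefully enough to present $\varphi_r^*$ block-by-block as the single map $S^m\Psi_r$ and to see the $\operatorname{reg},1$ condition turn into divisibility by $Y^{s_r}$. Once that identification is secured, the general-position conclusion is the classical fact that prescribing vanishing orders at distinct points of $\PP^1$ imposes independent linear conditions on binary forms.
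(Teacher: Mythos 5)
Your proof is correct and follows essentially the same route as the paper: both identify the blocks with binary forms of degree $m$ via $\Psi_r(\mu_{r,q})=\left[(n+1-r)X-rY\right]^{m-q}\left[(r-n)X+(r+1)Y\right]^{q}$, recognize the $\operatorname{reg},1$ condition as prescribing a vanishing order at the point of $\PP^1$ cut out by $Q_r$, and conclude from the distinctness of these $n+2$ points. The only difference is that you spell out the final step (pairwise coprimality of the $Q_r$ and the resulting codimension count in the UFD $\CC[X,Y]$) which the paper leaves as the classical fact that vanishing conditions at distinct points of $\PP^1$ are independent.
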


\begin{proof} Set $\mu_q:=z_1^{i_1(\hat k,i,m)+q}z_2^{i_2(\hat k,i,m)-q}(dz_1)^{m-q}(dz_2)^q$, $q=0,...,m$, the monomials spanning $\hat {V}_{\hat k,i,m}$ and $\mu_{q,r}:=u_1^{i_1(\hat k,i,m,r)+q}u_2^{i_2(\hat k,i,m,r)-q}(du_1)^{m-q}(du_2)^q$, $q=0,...,m$, the monomials spanning ${V}_{\hat k,i,m,r}$.

\ 

Consider the commutative diagram of isomorphisms:

\begin{equation*}
\begin{tikzcd}
 V_{\hat k,i,m,r}\arrow[r, "\varphi_r^*"]\arrow[dr, "\Psi_r"']& \hat {V}_{\hat k,i,m} \arrow[d, "\Psi"]  \\
  & \CC[X,Y]_{(m)}  \\
\end{tikzcd}
\end{equation*}

\noindent where the $\CC$-linear map $\Psi$ is defined by $\Psi(\mu_{q})=X^{m-q}Y^q$.

\ 

From (2.8) and (2.10) of section 2.2, it follows that:

$$\Psi_r(\mu_{r,q})=\left[(n+1-r)X-rY\right]^{m-q}\left [(r-n)X+(r+1)Y\right]^q $$

\noindent and hence the zero locus  $Z\left(\Psi_r(\mu_{r,q})\right)=(m-q)\text { p}_{r,1}+q\text { p}_{r,2}\subset \PP^1$, where $\text{p}_{r,1}=[r:n+1-r]$ and $\text{p}_{r,2}=[r+1:r-n]$.

\ 

The monomials $\mu_{r,q}$ with no poles along $E_{r+1}\cap  U_r=\{u_1=0\}$, generating $V^{reg,1}_{\hat k,i,m,r}$, have $q\ge\max \{0,-i_1(\hat k,i,m,r)\}$, hence:

\begin{align*}
    \Psi_r(V^{reg,1}_{\hat k,i,m,r})= \big \{ P\in \CC[X,Y]\text { }\big | \text { } Z(P)\ge \max \{0,-i_1(\hat k,i,m,r)\} \text { p}_{r,2}\big \}
\end{align*}

Since the points $\text{p}_{r,2}$, $r=-1,...,n$, are all distinct, the membership conditions for $P\in \Psi_r(V^{reg,1}_{\hat k,i,m,r})$, $r=-1,...,n$, are independent. Therefore the subspaces $\Psi_r(V^{reg,1}_{\hat k,i,m,r})\subset \CC[X,Y]_{(m)}$ are in general position and the result follows.

\end{proof}

\ 

\

\begin{corollary}  For each triple $(\hat k,i,m)$, the contribution $\hslash^0(A_n,\hat 
k,i,m)$  to $\hslash^0(A_n,m)$ is:
\vspace {-.1in}
\begin{align*}\hslash^0(A_n,\hat k,i,m)= \min \left\{\sum_{r=0}^{n-1}\operatorname{codim } (V_{\hat k,i,m,r}^{_{\operatorname{reg},1}},V_{\hat k,i,m,r})\text { }, \text { }\dim \hat V_{\hat k,i,m}^{_{\operatorname {reg}}}\right\}
\tag {2.22}\end{align*}

\noindent where:
\begin{itemize}
    \item [i)]  $\operatorname{codim }  (V_{\hat k,i,m,r}^{_{\operatorname{reg},1}},V_{\hat k,i,m,r})=\max \left\{0,\frac{m-i}{2}+\frac{2r-n+1}{2}\hat k\right\}\le m $    \hspace {.42in} $\operatorname{(2.23)}$

            \item [ii)]   $\dim \hat V_{\hat k,i,m}^{_{\text {reg}}}=\max \left\{0,m+1-\sum_{r=-1,n}\operatorname{codim } (V_{\hat k,i,m,r}^{_{\operatorname{reg},1}},V_{\hat k,i,m,r})\right\}$
\end{itemize}

\end{corollary}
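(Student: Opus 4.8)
The plan is to reduce the statement entirely to the general-position input of the preceding (general-position) Lemma together with an elementary dimension count. Abbreviate $N:=\dim\hat V_{\hat k,i,m}=m+1$ (the block $\hat B_{\hat k,i,m}$ consists of the $m+1$ monomials $\mu_q$, $q=0,\dots,m$) and set $W_r:=\varphi_r^*V_{\hat k,i,m,r}^{\operatorname{reg},1}\subseteq\hat V_{\hat k,i,m}$ for $r=-1,\dots,n$. Because each $\varphi_r^*$ restricts to an isomorphism $V_{\hat k,i,m,r}\xrightarrow{\cong}\hat V_{\hat k,i,m}$ by (2.17), codimension is preserved, so $c_r:=\operatorname{codim}(W_r,\hat V_{\hat k,i,m})=\operatorname{codim}(V_{\hat k,i,m,r}^{\operatorname{reg},1},V_{\hat k,i,m,r})$, and the whole problem now lives inside the single $(m+1)$-dimensional space $\hat V_{\hat k,i,m}$.

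First I would prove (i). Writing $V_{\hat k,i,m,r}=V_{k',i',m}$ with $(k',i')$ read off from (2.17), its $m+1$ spanning monomials are indexed by $q=0,\dots,m$ with $u_1$-exponent $i_1(\hat k,i,m,r)+q$; the condition $i_1\ge 0$ defining $V^{\operatorname{reg},1}$ deletes precisely the $\max\{0,-i_1(\hat k,i,m,r)\}$ monomials with $q<-i_1$. Hence $c_r=\max\{0,-i_1(\hat k,i,m,r)\}$, and substituting the exponents from (2.17) gives $-i_1(\hat k,i,m,r)=\tfrac{m-i}{2}+\tfrac{2r-n+1}{2}\hat k$, which is (2.23). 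The bound $c_r\le m$ then follows from the index constraint $|\hat k|\le\frac{i+m}{n+1}$: for $r\in\{-1,\dots,n\}$ one has $|2r-n+1|\le n+1$, so $|(2r-n+1)\hat k|\le i+m$ and therefore $\tfrac{m-i}{2}+\tfrac{2r-n+1}{2}\hat k\le m$.

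Next, the Note following Proposition 1 records $\hat V_{\hat k,i,m}^{\operatorname{reg}}=W_{-1}\cap W_n$, so part (ii) is the two-subspace case of the general-position Lemma: it gives $\dim\hat V_{\hat k,i,m}^{\operatorname{reg}}=\max\{0,(m+1)-c_{-1}-c_n\}$. For the main identity (2.22) I would use that $\bigcap_{r=-1}^n W_r\subseteq W_{-1}\cap W_n=\hat V_{\hat k,i,m}^{\operatorname{reg}}$, whence
$$\hslash^0(A_n,\hat k,i,m)=\dim\!\big(W_{-1}\cap W_n\big)-\dim\!\Big(\textstyle\bigcap_{r=-1}^n W_r\Big).$$
Applying the general-position Lemma to each of these intersections yields $\dim(W_{-1}\cap W_n)=\max\{0,N-(c_{-1}+c_n)\}$ and $\dim\bigcap_{r=-1}^n W_r=\max\{0,N-\sum_{r=-1}^n c_r\}$, with $N=m+1$.

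Finally I would run the elementary case analysis in the two quantities $B:=c_{-1}+c_n$ and $A:=\sum_{r=0}^{n-1}c_r$ (so $\sum_{r=-1}^n c_r=A+B$): if $B\ge N$ both dimensions above equal $0$ and the difference is $0=\min\{A,0\}$; if $B<N\le A+B$ the difference is $N-B=\dim\hat V^{\operatorname{reg}}\le A$; and if $A+B<N$ the difference is $A\le\dim\hat V^{\operatorname{reg}}$. In every case the difference equals $\min\{A,\dim\hat V_{\hat k,i,m}^{\operatorname{reg}}\}$, which is exactly (2.22). The only real ingredient is the general-position Lemma; everything else is bookkeeping, so the sole (modest) obstacle is verifying that the three-way outcome of the case split reproduces precisely the advertised $\min$, rather than proving any new geometry.
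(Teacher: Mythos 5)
Your proof is correct and follows essentially the same route as the paper: it derives (i) by counting the monomials of $V_{\hat k,i,m,r}$ with negative $u_1$-exponent via (2.17), obtains (ii) from the Note after Proposition 1 together with the general-position Lemma, and deduces (2.22) from $\hslash^0(A_n,\hat k,i,m)=\operatorname{codim}\bigl(\bigcap_{r=-1}^n\varphi_r^*V^{\operatorname{reg},1}_{\hat k,i,m,r},\hat V^{\operatorname{reg}}_{\hat k,i,m}\bigr)$ and general position. The only difference is that you spell out the final three-case verification that the difference of expected dimensions equals the advertised $\min$, which the paper leaves implicit.
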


\begin{proof} The formula (2.22)  follows directly from the subspaces $\varphi_r^*V_{\hat k,i,m,r}^{_{\operatorname{reg},1}}$ of $\hat V_{\hat k,i,m}$ being in general position (lemma 1) plus the fact that $\hslash^0(A_n,\hat k,i,m)=\operatorname{codim }  (\bigcap_{r=-1}^n\varphi_r^*V_{\hat k,i,m,r}^{\operatorname{reg},1} \text { }, \text { }\hat V_{\hat k,i,m}^{_{\operatorname {reg}}})$.

\

    Towards the expression (2.30), we have that the monomials spanning $V_{\hat k,i,m,r}$ are $u_1^{i_1+l}u_2^{i_2-l}dz_1^{m-l}dz_2^{l}$ where $l=0,...,m$ and  $i_1=\frac {i-m}{2}+\frac{-2r+n-1}{2}\hat k$ (a consequence of (\ref{eq:2.17}). Hence there are $\max \{0,-i_1\}$ monomials which are not regular along $\{u_1=0\}$. Note that in the range $|\hat k|\le \floor{\frac {i+m}{n+1}}$ we have $\max \{0,-i_1\}\le m$.

    \ 

    Identity ii) follows from the note after proposition 1 and lemma 1.
\end{proof}

\

\

\subsubsection{The geometry of $h^0(A_n,\hat k,i,m)$}

\ 

\

In the previous section,  the function $\hslash^0(A_n,\hat k,i,m)$  was fully determined via (2.22) and (2.23) of corollary 1. To understand $\hslash^0(A_n,m)$ the geometric properties of the function $\hslash^0(A_n,\hat k,i,m)$, when $n$ and $m$ are fixed, play a important role and this section uncovers them.

 \

The function $\hslash^0(A_n,\hat k,i,m)$ is an even function relative to $\hat k$, a consequence of $\operatorname{codim }  (V_{\hat k,i,m,r}^{_{\operatorname{reg},1}},V_{\hat k,i,m,r})=\operatorname{codim }  (V_{-\hat k,i,m,n-1-r}^{_{\operatorname{reg},1}},V_{-\hat k,i,m,n-1-r})$. 

\ 

Fix $n$, using corollary 1 we associate to each $m\ge 0$  a polygonal region $\mathcal{P}_n(m)$ in the $(i,\hat{k})-plane$. The polygon $\mathcal{P}_n(m)$ is the closure of the region where $\hslash^0(A_n,\hat k,i,m)$ is nonvanishing (with $i$ and $\hat{k}$ being considered as continuous parameters). The polygon $\mathcal{P}_n(m)$ has the polygonal decomposition 

$$\mathcal{P}_n(m)=\cup_{j=1}^{2n+2}\mathcal{P}_n^j(m)$$

\noindent where the $\mathcal{P}_n^j(m)$, $j=1,...,2n+2,$ are the regions  where $\hslash^0(A_n,\hat k,i,m)$ is given by a single affine function on $i$ and $\hat{k}$, making this decomposition unique, see below for more details.

\ 

\begin{remark} All the polygons $\mathcal{P}_n^j(m)$ are convex with the exception of two. One can do a convex  subdivision of $\mathcal{P}_n(m)=\cup_{j=1}^{2n+2}\mathcal{P}_n^j(m)$ in a natural way,

$$\mathcal{P}_n(m)=\cup_{r=1}^{4n}\mathcal{P'}_n^r(m)$$

 \noindent and more importantly the convex polygons $\mathcal{P}_n^{'r}(m)$ change with $m$ via $\mathcal{P}_n^{'r}(m)=\{x\in \RR^2 \text { } \big | \text { }   \mu_{r,l}(x)\le a_{r,l}m+b_{r,l},l=1,...,k_r, a_{r,l},b_{r,l}\in \QQ\}$ with $\mu_{r,l}$  linear forms  with $\ZZ$-coefficients. This type of decomposition will allow the use of the theory of polynomial weighted lattice sums over convex polytopes  (see \cite{Ehrhart}, \cite{Brion}, \cite{Vergne}) to derive properties of the function $\hslash^0(A_n,m)$, such as being a quasi-polynomial of degree 3 in $m$.

\end{remark}

\ 

The fact, that there are $2n+2$  polygons $\mathcal{P}_n^j(m)$ follows from the discussion below and the properties of the function $\hslash^0(A_n,\hat k,i,m)$ derived from corollary 1. The polygons $\mathcal{P}_n^j(m)$ have some of the properties described in the remark:   the coordinates of the vertices are affine functions of $m$ and the slopes of the edges are independent of $m$.

\ 

 To determine the polygonal decomposition defined above, we need to consider:
 
  \ 
 
 1) The $n+2$ lines coming from the components $E_r$, $r=0,...,n+1$, of $\hat E$. These lines are given by $\frac{m-i}{2}+\frac{2(r-1)-n+1}{2}\hat k=0$  and separate the half planes where the $\operatorname{codim }  (V_{\hat k,i,m,r-1}^{_{\operatorname{reg},1}},V_{\hat k,i,m,r-1})$ equals either $\frac{m-i}{2}+\frac{2(r-1)-n+1}{2}\hat k$ or $0$. All the lines pass through the point $(m,0)$ in the $(i,\hat k)$-plane. The relevant half plane $\{i\ge 0\}$ is therefore separated in $2(n+2)$ radial sectors with center the point  $(m,0)$.

 \ 
 
 2)  Two  extra lines are required to determine the regions where $\dim \hat V_{\hat k,i,m}^{_{\text {reg}}}$ either $= 0$ or $=m+1-\sum_{r=-1,n}\operatorname{codim } (V_{\hat k,i,m,r}^{_{\operatorname{reg},1}},V_{\hat k,i,m,r})$). The two lines of 1) associated with $E_0$ and $E_{n+1}$ are also required to determine these regions. We have the following:

    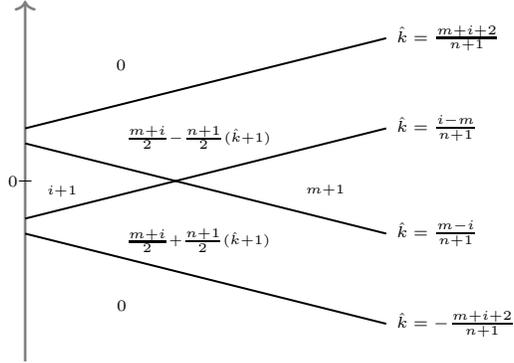
\begin{figure}[h!]
    \centering
		\begin{tikzpicture}[scale=0.4,domain=0:12]
			%	\draw[very thin,color=gray] (-0.1,-1.1) grid (3.9,3.9);
			%		\draw[help lines] (0,-5) grid (12,5);
			
			%		\draw[->] (-0.2,0) -- (4.2,0) node[right] {$x$};
			%		\draw[->] (0,-1.2) -- (0,4.2) node[above] {$f(x)$};
			\draw[->,color=gray, line width=1] (0,-6) -- (0,6);
			
			\draw[color=black, line width = 0.75]    plot (\x,1*\x/4-5/4) node[right]{\tiny ${\hat{k}= \frac{i-m}{n+1}}$};
			\draw[color=black, line width = 0.75]    plot (\x,-1*\x/4-7/4) node[right]{\tiny ${\hat{k}= -\frac{m+i+2}{n+1}}$};
			\draw[color=black, line width = 0.75]    plot (\x,-1*\x/4+5/4) node[right]{\tiny ${\hat{k}= \frac{m-i}{n+1}}$};
			\draw[color=black, line width = 0.75]    plot (\x,1*\x/4+7/4) node[right] {\tiny ${\hat{k}= \frac{m+i+2}{n+1}}$};

			\draw (1.25,0.2) node[anchor=north] {\tiny $_{i+1}$};
			%		\draw [line width=2.pt,color=yqyqyq] (0.,-5.8120155727067875) -- (0.,5.8224449186819065);
			\draw (3,2.167316223304202) node[anchor=north west] {\tiny $\displaystyle _{\frac{m+i}2 - \frac{n+1}2 (\hat{k}+1)}$};
			\draw (3,-1.2523974035576506) node[anchor=north west] {\tiny $\displaystyle _{\frac{m+i}2 + \frac{n+1}2 (\hat{k}+1)}$};
			\draw (10,0.2) node[anchor=north] {\tiny $_{m+1}$} ;
			\draw (2.6881882117080997,4.3603934405308244) node[anchor=north west] {\tiny $0$};
			\draw (2.7124163993584607,-3.6437188890081487) node[anchor=north west] {\tiny $0$};
			
			\draw     (-0.21,0) -- (0.21,0) node[left] {\tiny $0\,$} ;
		\end{tikzpicture}
				\caption{Graph of $\dim \hat V_{\hat k,i,m}^{_{\text {reg}}}$ on the $(\hat{k},i)$ plane}
    \end{figure}
  
  \
  
  3) The $2(n+2)$ sectors defined in 1), except for three between the lines associated to $E_{n}$,$E_{n+1}$,$E_{0}$ and $E_1$ and where $\sum_{r=0}^{n-1}\operatorname{codim } (V_{\hat k,i,m,r}^{_{\operatorname{reg},1}},V_{\hat k,i,m,r})=0$, each have a line segment separating the regions where  $\hslash^0(A_n,\hat k,i,m)=\sum_{r=0}^{n-1}\operatorname{codim } (V_{\hat k,i,m,r}^{_{\operatorname{reg},1}},V_{\hat k,i,m,r})$ or $\hslash^0(A_n,\hat k,i,m)=\dim \hat V_{\hat k,i,m}^{_{\operatorname {reg}}}$.

  \ 

  4) Altogether there are $2n+2$ polygons $\mathcal{P}_n^j(m)$ where $\hslash^0(A_n,\hat k,i,m)$  as  a function  in $i$ and $\hat k$ is defined by a single  nontrivial affine expression (the case $n=3$ is illustrated in the next figure). This follows from 1), 2) and 3) plus the fact that $\dim \hat V_{\hat k,i,m}^{_{\text {reg}}}$ has the same expression in the sectors bonded by the lines $E_{n}$, $E_{n+1}$, $E_{0}$ and $E_1$. Recall, that there is the symmetry coming from $\hslash^0(A_n,\hat k,i,m)$ being an even function relative to $\hat k$.

  \

  5) The above are statements are for fixed $n$ and independent of $m$. As for dependence in $m$. The coordinates of the vertices of the $2n+2$ polygonal regions are affine functions in $m$ with rational coefficients. Also important if one wants to give a complete description of the function $\hslash^0(A_n,m)$, the slopes of the boundary lines are independent of $m$.

  \ 

  6) The affine functions $\hslash^0(A_n,\hat k,i,m)_j:=\hslash^0(A_n,\hat k,i,m)|_{\mathcal{P}_n^j(m)}$ have the following structure for each $j=1,...,2n+2$, 

  \begin{align*}
      \hslash^0(A_n,\hat k,i,m)_j=a_j(n)i+b_j(n)\hat{k}+c_j(n)m+d_j(n) \tag {2.24}
  \end{align*}

  \noindent $a_j(n),b_j(n),c_j(n),d_j(n)\in \QQ$. It is relevant to note the fact that the coefficients in $i$ and $\hat{k}$ are independent of $m$.

\newpage
 
Below we illustrate the function $\hslash^0(A_n,\hat k,i,m)$ as a function of $\hat k$ and $i$ for $n=3$.  All key features of the general case are present. 

\begin{figure}[h]
			\tikzset{every picture/.style={line width=0.75pt}} %set default line width to 0.75pt        
			
			\begin{tikzpicture}[x=0.75pt,y=0.75pt,yscale=-1,xscale=1,color=gray]
				%uncomment if require: \path (0,485); %set diagram left start at 0, and has height of 485
				
%				\draw [color=red] (32.9,300.45) -- (32.9,500.74);
				%Straight Lines [id:da8584307186456532] 
				\draw [color=black]  [line width=1.25]    (33.5,301) -- (33.5,184.3) ;
				
				%Straight Lines [id:da8584307186456532] 
				\draw [color=black]  [line width=0.75]    (32.9,184.26) -- (466.33,9.98) ;
				%Straight Lines [id:da5972357659450518] 
				\draw  [color=black]   (32.9,188.81) -- (118.94,217.95) ;
%		\draw  [color=red, dashed]   (118.94,217.95) -- (179,242.5) ;
	\draw  [color=gray, line width=0.25, dashed]   (118.94,217.95) -- (466.33,359.948) ;
				%Straight Lines [id:da8693997793985198] 
				\draw [color=black]   (130.3,203.67) -- (466.33,474.74) ;
		\draw [color=gray, line width=0.25, dashed]   (32.9,125.099) -- (466.33,474.74) ;
				%Straight Lines [id:da47280300493709326] 
				\draw  [color=black]   (32.9,295.45) -- (118.77,266.77) ;
%		\draw  [color=red, dashed]   (118.77,266.77) -- (179,242.5) ;
	\draw  [color=gray, line width=0.25, dashed]   (118.77,266.77) -- (466.33,123.719) ;
				%Straight Lines [id:da34890011179909863] 
				\draw  [color=black]   (129.56,281.04) -- (466.33,9.98) ;
				%Straight Lines [id:da6337067902874778] 
		\draw  [color=gray, line width=0.25, dashed]   (129.56,281.04) -- (32.9,358.839) ;
				
				\draw [color=black]  [line width=0.75]    (32.9,300.45) -- (466.33,474.74) ;
				%Straight Lines [id:da00783833002733414] 
				\draw [color=black  ,draw opacity=1 ]   (118.94,217.95) -- (118.77,242.36) ;
				%Straight Lines [id:da2622667574623869] 
				\draw [color=black]    (118.94,217.95) -- (130.3,203.67) ;
				%Straight Lines [id:da7632682907648758] 
				\draw [color=black]  (130.3,203.67) -- (177.24,163.1) ;
				%Straight Lines [id:da4177056021501395] 
				\draw [color=black]    (177.24,163.1) -- (466.33,9.98) ;
				%Straight Lines [id:da3269709217323429] 
				\draw [color=black]   (118.94,242.36) -- (118.77,266.77) ;
				%Straight Lines [id:da4961103571964689] 
				\draw [color=black]   (118.77,266.77) -- (129.56,281.04) ;
				%Straight Lines [id:da6059008628756943] 
				\draw [color=black]   (177.99,322.39) -- (129.56,281.04) ;
				%Straight Lines [id:da6594910631686735] 
				\draw [color=black]    (177.99,322.39) -- (466.33,474.74) ;
				%Straight Lines [id:da44778282048217144] 
				%\draw    (323.96,237.11) -- (323.96,247.4) ;
				%Straight Lines [id:da5530633222338841] 
				
				%TICKS
				\draw     (38.07,9.98) -- (27.99,9.98) ;
				%Straight Lines [id:da8135603268916891] 
				\draw    (32.9,1.32) -- (32.9,472.15) ;
				%Straight Lines [id:da4703689388170229] 
				\draw  (37.87,466.35) -- (27.78,466.35) ;
				%Straight Lines [id:da7006287143610539] 
				\draw  [color=black]   (178.13,163.32) -- (178.13,202.22) -- (177.99,247.32) ;
		\draw  [color=gray, dashed, line width=0.5]   (178.13,163.32) -- (178.13,9.8) ;
	
				%Straight Lines [id:da30914606821318635] 
				\draw  [color=black]   (177.99,259.99) -- (177.99,322.39) ;
		\draw  [color=gray, dashed, line width=0.5]   (178.05,322.39) -- (178.05,474) ;
				%Straight Lines [id:da2704906176146291] 
				\draw    (37.57,184.09) -- (27.49,184.09) ;
				%Straight Lines [id:da9402635868025293] 
				\draw [color=gray]    (467.46,237.11) -- (467.46,247.4) ;
				%Straight Lines [id:da8803981319788401] 
				\draw    (38.07,300.42) -- (27.99,300.42) ;
				
				% Text Node
				%\draw (315.46,246.61) node [anchor=north west][inner sep=0.75pt]  [font=\normalsize,xscale=0.75,yscale=0.75]  {$2m$};
				% Text Node
				\draw (-3,4.5) node [anchor=north west][inner sep=0.75pt]  [font=\normalsize,xscale=0.75,yscale=0.75]  {\color{gray}$m+1$};
				% Text Node
				\draw (-4,171.53) node [anchor=north west][inner sep=0.75pt]  [font=\normalsize,xscale=0.75,yscale=0.75]  {\color{gray}$\displaystyle\frac{m+2}{4}$};
				% Text Node
				\draw (448.12,246.61) node [anchor=north west][inner sep=0.75pt]  [font=\normalsize,xscale=0.75,yscale=0.75]  {\color{gray}$3m+2$};
				% Text Node
				\draw (-12,461.87) node [anchor=north west][inner sep=0.75pt]  [font=\normalsize,xscale=0.75,yscale=0.75]  {\color{gray}$-m-1$};
				% Text Node
				\draw (170.87,249.61) node [anchor=north west][inner sep=0.75pt]  [font=\normalsize,xscale=0.75,yscale=0.75]  {\color{gray}$m$};
				\draw (-12,287.83) node [anchor=north west][inner sep=0.75pt]  [font=\normalsize,xscale=0.75,yscale=0.75]  {\color{gray}$-\displaystyle\frac{m+2}{4}$};

				\draw (63.33,170.07) node [anchor=north west][inner sep=0.75pt]  [xscale=0.75,yscale=0.75]  {\color{black}$\displaystyle\frac{m+i}{2} -2\hat{k} +1$};
				% Text Node
				\draw (54,236.61) node [anchor=north west][inner sep=0.75pt]  [xscale=0.75,yscale=0.75]  {\color{black}$\displaystyle i+1$};
				% Text Node
				\draw (63.33,288.73) node [anchor=north west][inner sep=0.75pt]  [xscale=0.75,yscale=0.75]  {\color{black}$\displaystyle\frac{m+i}{2} +2\hat{k} +1$};
				% Text Node
				\draw (120.66,230.55) node [anchor=north west][inner sep=0.75pt]  [xscale=0.75,yscale=0.75]  {\color{black}$\displaystyle\frac{3}{2}( m-i)$};
				% Text Node
				\draw (184.73,169.93) node [anchor=north west][inner sep=0.75pt]  [xscale=0.75,yscale=0.75]  {\color{black}$\displaystyle\frac{m-i}{2} +\hat{k}$};
				% Text Node
				\draw (184.73,287.93) node [anchor=north west][inner sep=0.75pt]  [xscale=0.75,yscale=0.75]  {\color{black}$\displaystyle\frac{m-i}{2} -\hat{k}$};
				% Text Node
				\draw (142.72,273.48) node [anchor=north west][inner sep=0.75pt]  [font=\footnotesize,rotate=-40.21,xscale=0.75,yscale=0.75]  {\color{black}$m-i-\hat{k}$};
				% Text Node
				\draw (146.82,194.99) node [anchor=north west][inner sep=0.75pt]  [font=\footnotesize,rotate=-40.52,xscale=0.75,yscale=0.75]  {\color{black}$m-i+\hat{k}$};
    \node[] at (460.5,140) {$E_4$};
    \node[] at (460.5,345) {$E_0$};
    \node[] at (420.5,420) {$E_1$};
    \node[] at (420.5,65) {$E_3$};
    \node[] at (190.5,30) {$E_2$};
				
\end{tikzpicture}
		\caption{The function  $\hslash^0(A_3,\hat k,i,m)$ with $m$ fixed on the $(i,\hat{k})$-plane. Recall that $\hslash^0(A_3,\hat k,i,m)$ is only defined at integral points with $4\hat k\equiv i+m \bmod{2}$.}
		
	\end{figure}

\newpage

\subsubsection{Closed formula for the asymptotics of $\hslash^0(A_n,m)$}

\ 

\

As mentioned in the introduction, one of the aims of this paper is to find the contribution that each $A_n$ singularity gives towards the the $m$-growth asymptotics of $h^1(X,S^m\Omega_X)$ when $X$ is the minimal resolution of a surface of general type $Y$ with canonical singularities.  We saw in section 1 that this contribution can be derived from $\hslash^0_\Omega(A_n)=\lim_{m\to \infty}\frac{\hslash^0(A_n,m)}{m^3}$. In this section we give a closed formula for $\hslash^0_\Omega(A_n)$.

\

\begin{proof} (of \hyperref[thm:1]{Theorem 1}) Part (a) follows directly from  Proposition 1

\begin{align*}\hslash^0(A_n,m)= \sum_{\substack{ 0\le i\le mn-1\\ |\hat k|\le \frac {i+m}{n+1}\\(n+1)\hat k\equiv i+m  \bmod{2}}} \hslash^0(A_n,\hat k,i,m), \tag {2.27} \end{align*}

\noindent corollary 1 describing $\hslash^0(A_n,x_2,x_1,m)(=h_{n,m}(x_1,x_2))$ and  the polygonal geometry associated to $\hslash^0(A_n,\hat k,i,m)$ described in 2.4.4.

\ 

Part b) asserts that:
   
 \begin{align*} \hslash^0_\Omega(A_n):=\lim_{m\to \infty}\frac{\hslash^0(A_n,m)}{m^3}=\frac {4}{3}\sum^n_{j=1}\frac {1}{j^2}-\frac{12n^4+65n^3+117n^2+72n}{6(n+1)^2(n+2)^2}.\tag {2.25}\end{align*}

\noindent and
\begin{align*} \hslash^0(A_n,m)=\hslash^0_\Omega(A_n)m^3+3\hslash^0_\Omega(A_n)m^2+O(m),\tag {2.26}\end{align*}

\ 

Consider:

$$I(A_n,m):=\iint_{\mathcal{P}_n(m)}h_{n,m}(x)dx$$

\ 

\

Claim 1:  $$\sum_{ x\in \mathcal{P}_n(m)\cap \ZZ^2} h_{n,m}(x)=I(A_n,m)+O(m)$$

\ 

\ 

If $x=(x_1,x_2)$, set $S_x:=[x_1-\frac{1}{2},x_1+\frac{1}{2}]\times [x_2-\frac{1}{2},x_2+\frac{1}{2}] $. Consider:

$$C_{\mathcal{P}_n(m)}=\{x\in \mathcal{P}_n(m)\cap \ZZ^2\text { | } S_x\subset \mathcal{P}_n(m)\}$$

$$PC_{\mathcal{P}_n(m)}=\{x\in \mathcal{P}_n(m)\cap \ZZ^2\text { | } S_x\not\subset \mathcal{P}_n(m)\}$$

$$O_{\mathcal{P}_n(m)}=\{x\in \ZZ^2\setminus (\mathcal{P}_n(m)\cap \ZZ^2)\text { | } S_x\cap \mathcal{P}_n(m)\neq \emptyset\}$$

\

Note $\mathcal{P}_n(m)\cap \ZZ^2=C_{\mathcal{P}_n(m)}\cup PC_{\mathcal{P}_n(m)}$ and:

$$\mathcal{P}_n(m)\subset\cup_{x\in C_{\mathcal{P}_n(m)}\cup PC_{\mathcal{P}_n(m)}\cup O_{\mathcal{P}_n(m)}}S_x $$

\ 

We start by showing:

\begin{align*}
    \sum_{ x\in \mathcal{P}_n(m)\cap \ZZ^2} h_{n,m}(x)=\sum_{x\in \mathcal{P}_n(m)\cap \ZZ^2}\iint_{S_x}h_{n,m}(y)dy + O(m) \tag {2.28}
\end{align*}

\noindent Note: $h_{n,m}(y)=0$ if $y\not \in \mathcal{P}_n(m)$. 

\

We need a further decomposition of $C_{\mathcal{P}_n(m)}$ with:

$$SC_{\mathcal{P}_n(m)}=\{x\in C_{\mathcal{P}_n(m)} { | } \exists j, S_x\subset \mathcal{P}_n^j(m)\}$$

\noindent and $MC_{\mathcal{P}_n(m)}=C_{\mathcal{P}_n(m)}\setminus SC_{\mathcal{P}_n(m)}$. The points $x\in C_{\mathcal{P}_n(m)}$,  $S_x$ contained in a single polygon $\mathcal{P}_n^j(m)$ and hence $h_{n,m}(x)$  given by a single affine function on $S_x$, satisfy:

\begin{align*}
   h_{n,m}(x)=\iint_{S_x} h_{n,m}(y) , \hspace {.2in} \text { } \forall x\in SC_{\mathcal{P}_n(m)} \tag {2.29}
\end{align*}

If $x\in MC_{\mathcal{P}_n(m)}$ the equality above doesn't hold, but:

\begin{align*}
    \big |h_{n,m}(x)-\iint_{S_x} h_{n,m}(y)\big |< C(n) , \hspace {.2in} \text { } \forall x\in MC_{\mathcal{P}_n(m)} \tag {2.30}
\end{align*}

\noindent where $C(n)=\max \{|a_j(n)|,|b_j(n)|;j=1,...,2n+2\}$, with $a_j(n)$ and $b_j(n)$ from (2.24), is a bound on the partial derivatives of $\hslash^0(A_n,y,m)$
(bound independent from $m$).

\ 

Let $x\in PC_{\mathcal{P}_n(m)}$, using the fact that  $0\le h_{n,m}(z)\le 1$ $\forall z\in \partial \mathcal{P}_n(m)$ and  $C(n)$, it follows that $\forall y \in S_x$, $|h_{n,m}(y)|<1+C(n)$  ($h_{n,m}(xy=0$ if $y\not \in \mathcal{P}_n(m)$). Hence:

\begin{align*}
    \big |h_{n,m}(x)-\iint_{S_x} h_{n,m}(y)\big |< 1+ C(n) , \hspace {.2in} \text { } \forall x\in PC_{\mathcal{P}_n(m)} \tag {2.31}
\end{align*}

The subclaim (2.28) follows from $\# (MC_{\mathcal{P}_n(m)}\cup PC_{\mathcal{P}_n(m)})=O(m)$, it grows with $m$  as the sum of the lengths of all edges of the polygonal decomposition of $\mathcal{P}_n(m)$.

\ 

To prove the claim 1, it remains to show that:

$$\sum_{x\in  O_{\mathcal{P}_n(m)}}\iint_{S_x}h_{n,m}(y)dy=\iint_{\mathcal{P}_n(m)}h_{n,m}(y)dy-\sum_{x\in \mathcal{P}_n(m)\cap \ZZ^2}\iint_{S_x}h_{n,m}(y)dy=O(m)$$

\noindent which follows from $|h_{n,m}(y)|<1+C(n)$ $\forall y\in S_x$ if $x\in O_{\mathcal{P}_n(m)}$ and $\#  O_{\mathcal{P}_n(m)}=O(m)$.

\

Claim 2:  $$\sum_{ x\in \mathcal{P}_n(m)\cap \ZZ^2} h_{n,m}(x)=2 \hslash^0(A_n,m) + O(m)$$

\ 

\ 

Set $$\hslash^0(A_n,\hat {k},m):=\sum^{n(m+1)-1}_{\substack{i=0\\(n+1) \hat{k}\equiv i+m  \bmod{2}}} \hslash^0(A_n,\hat{k},i,m)$$

\noindent then $\hslash^0(A_n,m)=\sum_{|\hat{k}|\le m+1}\hslash^0(A_n,\hat {k},m)$. It is enough to show $\exists$ $A(n)>0$ such that:

%\begin{align*}
%    |\sum^{n(m+1)-1}_{i=0} \hslash^0(A_n,\hat{k},i,m)-2\hslash^0(A_n,\hat {k},m)|<A(n) \hspace{.3in} \forall |\hat{k}|\le m+1 \tag {2.32}
%\end{align*}
\begin{align*}
    \left|\sum^{n(m+1)-1}_{i=0} \hslash^0(A_n,\hat{k},i,m)-2\hslash^0(A_n,\hat {k},m)\right|<A(n) \hspace{.3in} \forall |\hat{k}|\le m+1 \tag {2.32}
\end{align*}

For fixed $\hat{k}$ (and also $n$ and $m$), the function $\hslash^0(A_n,\hat{k},i,m)$ is a piecewise  affine function in $i$ with at most $n+2$ affine pieces (and continuous if $i$ is considered a continuous parameter).

\ 

One has $\hslash^0(A_n,\hat{k},i,m)=\frac{1}{2}[\hslash^0(A_n,\hat{k},i-1,m)+\hslash^0(A_n,\hat{k},i+1,m)]$ if $i-1,i,i+1$ is the same affine piece. Otherwise one has  $|\hslash^0(A_n,\hat{k},i,m)-\frac{1}{2}[\hslash^0(A_n,\hat{k},i-1,m)+\hslash^0(A_n,\hat{k},i+1,m)]|\le C(n)$ with $C(n)$ as above. One also has the boundary condition that $0\le \hslash^0(A_n,\hat{k},0,m),\hslash^0(A_n,\hat{k},n(m+1)-1,m)\le 1$. All the above implies $A(n)=2+(n+2)C(n)$ works in (2.32), with $2+C(n)$ bounding the noncancelation at end points and $(n+1)C(n)$ bounding the noncancelation at the (at most) $n+1$ transition points  between the affine functions.

\ 

\

Claims 1 and 2 and the symmetry of both $\mathcal{P}_n(m)$ and  $h_{n,m}(x)$ with respect to the $x_1$-axis (i.e. $i$-axis) imply that  

$$\hslash^0(A_n,m)=\iint_{{\mathcal{P}^+_n(m)}}h_{n,m}(x)dy+O(m)$$

\noindent  where $\mathcal{P}^+_n(m):=\mathcal{P}_n(m)\cap \{x_2\ge 0\}$. The polygon $\mathcal{P}^+_n(m)$ has a polygonal decomposition $\mathcal{P}^+_n(m)=\cup_{\ell=0}^{n+1}\mathcal{P}^{\ell}_n(m)$, where for each $\ell=0,...,n+1$,

$$h_{n,m,\ell}(x):=h_{n,m}(x)|_{\mathcal{P}^{\ell}_n(m)}$$ 

\noindent is given by a single  affine expression in $x=(x_1,x_2)$ (and $m$). The  polygons $\mathcal{P}^{\ell}_n(m)$, $\ell=0,...,n+1$  will be described via their vertices. To that end,  for $j=1,...,n$ consider the points:
%	\begin{align*}
%		v_j := \left(
%		m \left[\frac{2 (j-1)}{-j+n+2}+\frac{2 j}{j-n-3}+1\right], \frac{2 m}{(j-n-3) (j-n-2)}
%		\right)
%	\end{align*}
    \begin{align*}
        v_j:= \left(
            \frac{2 j (m+1)}{j-n-3}+\frac{2 (j-1) (m+1)}{-j+n+2}+m, 
            \frac{2 (m+1)}{(j-n-3) (j-n-2)}
        \right)
    \end{align*}

\

	\begin{enumerate}
		\item[(i)] $\mathcal{P}^{0}_n(m)$:  $$\left(0,\frac{m}{n+1}\right)\to v_1\to \left(\frac{mn-2}{2+n},0\right)\to (0,0)\to \left(0,\frac{m}{n+1}\right),$$
  
		and where $h_{n,m,0}(x)=x_1+1.$
		\item[(ii)] $\mathcal{P}^{1}_n(m)$:
			\begin{align*}
				v_1\to v_2\to 
				\left(m,0\right)\to
				\left(\frac{mn-2}{2+n},0\right)\to v_1,
			\end{align*}
 
		and $h_{n,m,1}(x)=\frac{n(m-x_1)}{2}$.

		\item[(iii)] $\mathcal{P}^{j}_n(m)$, $j=2,\dots,n$: 
  
  For $j=2,\dots,n-1$,
  \begin{align*}
			v_j\to v_{j+1}\to (m,0)\to v_j 
			\end{align*}

   and for $j=n$,
   \begin{align*}
			v_n\to \left(n(m+1)-1,m+1\right) \to (m,0)\to v_n 
			\end{align*}
   
   and the function $h_{n,m,j}(x)= 
				\frac{1}{2} ((j-1)-n) (x_1-(j-1)x_2-m)$.

		\item[(iv)] $\mathcal{P}^{n+1}_n(m)$:
			\begin{align*}
				\left(0,\frac{m+2}{n+1}\right)&\to \left(n(m+1)-1,m+1\right)\to v_{n}\to \cdots\\ \cdots&\to v_2\to v_1\to \left(0,\frac{m}{n+1}\right)\to \left(0,\frac{m+2}{n+1}\right)
			\end{align*}
		 and where $h_{n,m,n+1}(x)=\frac{1}{2} (x_1-x_2 (n+1)+m+2)$.
	\end{enumerate}

\ 

The above $\mathcal{P}^{\ell}_n(m)$ and $h_{n,m,\ell}(x)$, $\ell=0,...,n+1$ give:

\begin{align*} \hslash^0(A_n,m)=\sum_{\ell=0}^{n+1} \iint_{{\mathcal{P}^{\ell}_n(m)}}h_{n,m,\ell}(x)dx=\hslash^0_\Omega(A_n)m^3+3\hslash^0_\Omega(A_n)m^2+O(m)
\end{align*}

\noindent with $\hslash^0_\Omega(A_n)$ as in (2.25).
\end{proof}

 \ 

 \ 

\begin{corollary}\label{cor:h0-an-pi-bounded} The invariants $\hslash^0_\Omega(A_n)$ increase with $n$ and are bounded. Moreover:

\begin{align*}
\lim_{n\to \infty} \hslash^0_\Omega(A_n)=\frac {2\pi^2}{9}-2
\end{align*}

\end{corollary}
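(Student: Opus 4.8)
The plan is to start from the closed formula for $\hslash^0_\Omega(A_n)$ established in Theorem 1(b), namely
\begin{align*}
\hslash^0_\Omega(A_n)=\frac{4}{3}\sum_{j=1}^{n}\frac{1}{j^2}-\frac{12n^4+65n^3+117n^2+72n}{6(n+1)^2(n+2)^2},
\end{align*}
and analyze each of the two summands separately as $n\to\infty$. The first term is a partial sum of the Basel series, so $\frac{4}{3}\sum_{j=1}^n \frac{1}{j^2}\to \frac{4}{3}\cdot\frac{\pi^2}{6}=\frac{2\pi^2}{9}$ by Euler's evaluation of $\zeta(2)$. The second, purely rational term is a ratio of quartic polynomials whose leading coefficients are $12$ in the numerator and $6$ in the denominator, so it converges to $12/6=2$; hence the full limit is $\frac{2\pi^2}{9}-2$, giving the stated value. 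The convergence of the limit itself is therefore essentially immediate once the formula is in hand.

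The more substantive assertion is that the sequence $\hslash^0_\Omega(A_n)$ is \emph{increasing} and \emph{bounded}. First I would show monotonicity by examining the discrete difference $\Delta_n:=\hslash^0_\Omega(A_{n+1})-\hslash^0_\Omega(A_n)$. The contribution from the Basel-type sum to $\Delta_n$ is the single positive term $\frac{4}{3(n+1)^2}$, while the contribution from the rational function is $R(n)-R(n+1)$, where $R(n):=\frac{12n^4+65n^3+117n^2+72n}{6(n+1)^2(n+2)^2}$. The plan is to combine these over a common denominator and verify that the resulting rational expression is positive for all $n\ge 1$; since after clearing denominators one is left with a polynomial inequality in $n$ with positive-integer coefficients (up to sign bookkeeping), this reduces to a finite check of the numerator's positivity, which can be done by showing all its coefficients are nonnegative or by a direct factorization.

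For boundedness, the cleanest route is to combine monotonicity with convergence: an increasing sequence that converges is automatically bounded above (by its limit $\frac{2\pi^2}{9}-2$) and below (by its first term $\hslash^0_\Omega(A_1)$). Thus boundedness is a formal consequence and needs no separate estimate. I expect the main obstacle to be the monotonicity step: the rational part $R(n)$ is not itself monotone in an obvious way, and its difference must be balanced delicately against the $\frac{4}{3(n+1)^2}$ gain from the Basel term. Concretely, since $R(n)\to 2$ from below (one should check the sign of $2-R(n)$), the rational part contributes a small negative increment $R(n)-R(n+1)$ that could in principle swamp the Basel gain for small $n$; the resolution is the explicit polynomial-positivity computation, and verifying that the cleared numerator has no positive real roots (e.g.\ via Descartes' rule of signs or an explicit lower bound on each coefficient) is where the real work lies. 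The limit evaluation and the deduction of boundedness are then routine.
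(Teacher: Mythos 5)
Your proposal is correct and coincides with the paper's (implicit) argument: the corollary is stated without proof as an immediate consequence of the closed formula of Theorem 1(b), exactly as you use it, and the limit evaluation via $\zeta(2)=\pi^2/6$ for the Basel term and the ratio of leading coefficients $12/6=2$ for the rational term is right, with boundedness then following formally from monotonicity plus convergence. The one step you leave as a computation does go through: writing $R(n)=2-\frac{7n^3+39n^2+72n+48}{6(n+1)^2(n+2)^2}$, the difference $\hslash^0_\Omega(A_{n+1})-\hslash^0_\Omega(A_n)$ has, after clearing the common denominator $6(n+1)^2(n+2)^2(n+3)^2$, numerator $n^4+9n^3+33n^2+47n+22$, all of whose coefficients are positive, so the sequence is strictly increasing for all $n\ge 1$ (e.g.\ at $n=1$ this yields $29/216-11/108=7/216>0$, consistent with the values $\hslash^0_\Omega(A_1)=11/108$ and $\hslash^0_\Omega(A_2)=29/216$ implied by the paper's tables).
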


\

\begin{remark}
 \begin{enumerate}
     \item [i)] The invariant $\hslash^0_\Omega(A_1)$ was known   \cite{thomas} (appeared  in \cite{bogomolov_nodes} unfortunately with an error). The $A_1$ case is quite direct since the exceptional locus has a single component. Some cases of $\hslash^0_\Omega(A_n)$ for low $n$ were known to the authors \cite{de2019resolutions} and \cite{weiss2020deformations}.
    \item [ii)] The function $\hslash^0(A_n,m)$ for fixed $n$  is a quasi-polynomial in $m$  of degree 3. This follows from the theory of polynomial weighted lattice sums over convex polytopes $\mathcal{P}({\bf b})$, ${\bf b}=(b_1,...,b_k)$, defined by $k$ inequalities, $\mu_l(x)\le b_l$, where the linear  forms $\mu_l(x)$ are fixed, but ${\bf b}$ varies (\cite{Ehrhart}, \cite{Brion}, \cite{Vergne}). A natural convex polygon decomposition of the polygons $\mathcal{P}_n^+(m)$ with the required properties  can be found  by decomposing $\mathcal{P}^{n+1}_n(m)$ into $n$ polygons by introducing vertical line segments above the points $v_2$,...,$v_n$.

     \item [iii)]  In \hyperref[thm:1]{Theorem 1}, we showed that the cubic and quadratic coefficients of $\hslash^0(A_n,m)$ have period 1.  In future work, we describe a divisibility condition for the  the least common multiple of the periods of the coefficients of the quasi-polynomial for all $n$, which allows us to determine $\hslash^0(A_n,m)$ for low $n$.  In the case of $A_1$ the least common multiple of the periods is 6 \cite{bruin2022explicit}. Knowing the functions $\hslash^0(A_n,m)$ can be used to obtain information on the degrees of symmetric differentials that occur on a surface which is a resolution of a surface with $A_n$ singularities. 
 \end{enumerate}

\end{remark}

\

\begin{example} For $A_2$ singularities the lcm of the periods of the coefficients is also $6$  and $\hslash^0(A_2,m)$ is described by the polynomials: 

\begin{align*}
    \hslash^0(A_2,m) =
\begin{cases}
 \frac{29}{216} m^3 + \frac{29}{72}m^2 + \frac{1}{12}m \text { }\text {  } &m\equiv 0\text { } (\text {mod} \text { } 6 )\\
 \frac{29}{216}m^3 + \frac{29}{72}m^2 + \frac{1}{8}m - \frac{143}{216}\text { }\text {  } &m\equiv 1\text { } (\text {mod} \text { } 6 )\\
 \frac{29}{216}m^3 + \frac{29}{72}m^2 + \frac{7}{36}m - \frac{2}{27} \text { }\text {  } &m\equiv 2\text { } (\text {mod} \text { } 6 )\\
 \frac{29}{216}m^3 + \frac{29}{72}m^2 + \frac{1}{8}m + \frac{3}{8}\text { }\text {  } &m\equiv 3\text { } (\text {mod} \text { } 6 )\\
 \frac{29}{216}m^3 + \frac{29}{72}m^2 + \frac{1}{12}m - \frac{10}{27}\text { }\text {  } &m\equiv 4\text { } (\text {mod} \text { } 6 )\\
 \frac{29}{216}m^3 + \frac{29}{72}m^2 + \frac{17}{72}m - \frac{7}{216} \text { }\text {  } &m\equiv 5\text { } (\text {mod} \text { } 6 )
\end{cases}
\end{align*}
\end{example}

\

\

\

\section{Applications}

\

\subsection{The $1$-st cohomological $\Omega$-asymptotics of $A_n$ singularities }

\ 

\

Let $X$ be the minimal resolution of an orbifold surface $Y$ of general type. The asymptotics of the localized component $Lh^1(X,S^m\Omega^1_X)$ of $h^1(X,S^m\Omega^1_X)$ described in (1.3) plays a role in the QS-bigness criterion (1.4). 

\ 

In this section, we establish the formula for the contribution given by each $A_n$ singularity to $Lh^1(X,S^m\Omega^1_X)$. This contribution   consists of:
%$1$-st cohomological $\Omega$-asymptotics of $A_n$ (the limit below exists):
\begin{align*}
     h^1_\Omega(A_n):=\lim_{m\to \infty}\frac{h^1(A_n,m)}{m^3}
\end{align*}
\noindent and it is called the $1-$cohomological $\Omega$-asymptotics  of $A_n$ (the limit exists).

\

\begin{proof} (of \hyperref[thm:2]{Theorem 2}) Using relation (1.7) for $A_n$ singularities, $$h^1(A_n,m)=
			\mu (A_n, m)- \chi_{\operatorname{orb}} (A_n, m)-\hslash^0(A_n,m)$$ we find $h^1_\Omega(A_n)$ to be

\begin{align*}
     h^1_\Omega(A_n)=\lim_{m\to \infty}\frac{\mu(A_n,m)}{m^3}-\lim_{m\to \infty}\frac{\chi_{\operatorname{orb}}(A_n,m)}{m^3}-\hslash^0_\Omega(A_n). 
\end{align*}

\ 
 
 The invariant $\hslash^0_\Omega(A_n)$ was determined in \hyperref[thm:1]{Theorem 1}. The invariants $\chi_\text{orb} (A_n,m)$ are given by formula (1.9) along with the local Chern  numbers
 
 \begin{align*} c_1^2(A_n) = 0
\hspace {.3in}\text {and}\hspace {.3in} c_2(A_n)=e(E) - \frac{1}{|G_{A_n}|}=\frac{n(n+2)}{n+1}, 
     \end{align*} 
where $e(E)$ is the topological Euler characteristic of the exceptional locus of the minimal resolution  and $|G_{A_n}|$ is the order of the local fundamental group of the $A_n$ singularity (\cite{blache} 3.18). Hence:

 \begin{align*} \lim_{m\to \infty}\frac{\chi_{\operatorname{orb}}(A_n,m)}{m^3}=\frac{s_2(A_n)}{3!}=-\frac{n(n+2)}{6(n+1)}.
    \end{align*}

 \

 Now, we turn our attention to  the invariants $\mu(A_n, m)$. The key feature of these invariants is that
 
 \begin{align*} \lim_{m\to \infty}\frac{\mu(A_n,m)}{m^3}=0
     \end{align*}

This is shown in \cite{blache} 4.4 or \cite {langer}) (it follows from general results on reflexive sheaves on quotient singularities, see also \cite{wahl_chernclasses}). A complete description of the invariants $\mu(A_n, m)$ is known to the authors and will appear in future work. For example, for fixed $n$, $\mu(A_n, m)$ is a quasi-polynomial of degree 1 in $m$.
\end{proof}

\ 

\begin{corollary}\label{cor:h1omega-An-goes-to-infty}  The invariants $h^1_\Omega(A_n)$ increase with $n$ and:

$$\lim_{n\to \infty} h^1_\Omega(A_n)=\infty$$
    
\end{corollary}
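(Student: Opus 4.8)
The plan is to sidestep the degree-five rational expression of \hyperref[thm:2]{Theorem 2} and work instead from the decomposition already produced in its proof. There one has $h^1_\Omega(A_n)=\lim_{m\to\infty}\mu(A_n,m)/m^3-\lim_{m\to\infty}\chi_{\operatorname{orb}}(A_n,m)/m^3-\hslash^0_\Omega(A_n)$, where the first limit is $0$ and the second is $s_2(A_n)/3!=-\tfrac{n(n+2)}{6(n+1)}$. This gives the clean identity
\begin{align*}
 h^1_\Omega(A_n)=\frac{n(n+2)}{6(n+1)}-\hslash^0_\Omega(A_n),
\end{align*}
which one may also confirm against the formula of \hyperref[thm:2]{Theorem 2} by checking $n^5+19n^4+83n^3+137n^2+80n-(12n^4+65n^3+117n^2+72n)=n(n+1)(n+2)^3$. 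The corollary then reduces to the behavior of the two summands, both of which are under control.

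For the divergence I would write the rational part as $g(n):=\tfrac{n(n+2)}{6(n+1)}=\tfrac{n+1}{6}-\tfrac{1}{6(n+1)}$, which increases to $+\infty$, and invoke Corollary~\ref{cor:h0-an-pi-bounded} to bound $\hslash^0_\Omega(A_n)\le\tfrac{2\pi^2}{9}-2$. Then $h^1_\Omega(A_n)\ge g(n)-(\tfrac{2\pi^2}{9}-2)\to\infty$, which is the limit assertion.

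For monotonicity I would compare consecutive increments. On one side, $g(n+1)-g(n)=\tfrac16+\tfrac{1}{6(n+1)(n+2)}>\tfrac16$. On the other, since by Corollary~\ref{cor:h0-an-pi-bounded} the sequence $\hslash^0_\Omega(A_n)$ is increasing with first term $\hslash^0_\Omega(A_1)=\tfrac{11}{108}$ and supremum $\tfrac{2\pi^2}{9}-2$, every single increment is squeezed by the total variation:
\begin{align*}
 0<\hslash^0_\Omega(A_{n+1})-\hslash^0_\Omega(A_n)\le\Big(\tfrac{2\pi^2}{9}-2\Big)-\tfrac{11}{108}.
\end{align*}
Subtracting the two estimates, $h^1_\Omega(A_{n+1})-h^1_\Omega(A_n)>\tfrac16-\big[(\tfrac{2\pi^2}{9}-2)-\tfrac{11}{108}\big]$, which is positive exactly when $\pi^2<\tfrac{245}{24}$.

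The only quantitative obstacle is therefore this single numerical inequality $\pi^2<\tfrac{245}{24}\approx 10.21$, comfortably true since $\pi^2\approx 9.87$; I would seal it with any crude rational bound such as $\pi^2<10$. The one conceptual point to get right is that bounding \emph{each} increment by the \emph{full} total variation is what makes the argument uniform in $n$ and avoids a separate, messier treatment of small $n$ (where a naive per-step estimate like $\hslash^0_\Omega(A_{n+1})-\hslash^0_\Omega(A_n)<\tfrac{4}{3(n+1)^2}$ is too weak); this is legitimate precisely because $\hslash^0_\Omega$ is monotone, so $\hslash^0_\Omega(A_{n+1})\le\tfrac{2\pi^2}{9}-2$ and $\hslash^0_\Omega(A_n)\ge\tfrac{11}{108}$ hold simultaneously.
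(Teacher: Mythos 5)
Your proposal is correct, and every numerical ingredient checks out: the identity $n^5+19n^4+83n^3+137n^2+80n-(12n^4+65n^3+117n^2+72n)=n(n+1)(n+2)^3$ is right, so $h^1_\Omega(A_n)=\tfrac{n(n+2)}{6(n+1)}-\hslash^0_\Omega(A_n)$ does follow both from the decomposition in the proof of Theorem 2 and from the closed formulas; $\hslash^0_\Omega(A_1)=\tfrac{288-266}{216}=\tfrac{11}{108}$ is correct; and the final inequality reduces, as you say, to $\pi^2<\tfrac{245}{24}\approx 10.21$, which holds. The paper states this corollary without proof, so there is nothing to compare line by line; the implicit route would be to manipulate the explicit degree-five rational function of Theorem 2 together with the partial sums $\tfrac43\sum 1/k^2$, which is messier. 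Your route is cleaner in two respects: the limit statement becomes immediate once $\hslash^0_\Omega(A_n)$ is bounded (Corollary 2) and $g(n)=\tfrac{n+1}{6}-\tfrac{1}{6(n+1)}\to\infty$; and for monotonicity, bounding each increment of $\hslash^0_\Omega$ by the total variation $\bigl(\tfrac{2\pi^2}{9}-2\bigr)-\tfrac{11}{108}$ — legitimate precisely because Corollary 2 gives monotonicity of $\hslash^0_\Omega(A_n)$, so the supremum equals the limit and the infimum equals the first term — yields a bound uniform in $n$, whereas the termwise estimate $\hslash^0_\Omega(A_{n+1})-\hslash^0_\Omega(A_n)<\tfrac{4}{3(n+1)^2}$ indeed fails to beat $\tfrac16$ at $n=1$. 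The only cost of your approach is that it leans on the monotonicity assertion of Corollary 2, which the paper also states without proof; if one wanted a fully self-contained argument one would still have to verify that $\hslash^0_\Omega(A_n)$ is increasing from the formula in Theorem 1, but taking Corollary 2 as given (as the paper does), your argument is complete.
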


 \
 
 \begin{remark} The QS-bigness criterion (1.4) has $h^1_\Omega(A_n)$ as the contribution of $A_n$ to the localized component $Lh^1(X,S^m\Omega^1_X)$ of $h^1(X,S^m\Omega^1_X)$, while in the bigness criterion in \cite{roulleau2014} the contribution of $A_n$ to $h^1(X,S^m\Omega^1_X)$ is  $\frac{1}{2}\lim_{m\to \infty} \frac{\chi(A_n,m)}{m^3}$.  It follows from \hyperref[thm:1]{Theorem 1},  \hyperref[cor:h0-an-pi-bounded]{Corollary 2}, \hyperref[thm:2]{Theorem 2} and \hyperref[cor:h1omega-An-goes-to-infty]{Corollary 3} that the  contribution of each  $A_n$ in the QS-bigness criterion is always larger and approaches twice the contribution in the criterion of \cite{roulleau2014} as $n\to \infty$. In Part I of this work, we present the implications of this remark towards the wider range of pair of Chern numbers for which the CMS-bigness criterion can hold when compared to the criterion in \cite{roulleau2014}.
     
 \end{remark}

\

\begin{small}
\begin{table}[!htb]
\caption{ }
	\centering
	\resizebox{\columnwidth}{!}
 {\begin{tabular}{ p{1.5cm}  ccc ccc ccc c}
\hline
$n$& $1$ & $2$ & $3$& $4$ & $5$ & $6$ & $7$ \\ 
\hline
\addlinespace[1.5ex]
$h^1_\Omega(A_n)$  & $\dfrac{4}{27}$ & $\dfrac{67}{216}$ & $\dfrac{1283}{2700}$ & $\dfrac{577}{900}$ & $\dfrac{106819}{132300}$ & $\dfrac{1030727}{1058400}$ & $\dfrac{5431459}{4762800}$\\
\addlinespace[1.5ex]
\hline
\end{tabular}}

\end{table}
\end{small}

\

\subsection{ Extension of symmetric differentials for $A_n$ singularities}

\ 

\ 

%Let $(\tilde {X},E)$ be the germ of the minimal resolution
%of a quotient singularity $(X,x)$. It is well known that symmetric differentials $w\in H^0(\tilde {X}\setminus E,S^m\Omega^1_{\tilde {Y}})$ acquire mild poles along the components of the exceptional divisor $E$. More precisely,  the poles of $w\in H^0(\tilde {X}\setminus E,S^m\Omega^1_{\tilde {Y}})$ are at worst logarithmic, i.e. $H^0(\tilde {Y}\setminus E,S^m\Omega^1_{\tilde {Y}})=H^0(\tilde {Y},S^m\Omega^1_{\tilde {Y}}(\text {log} E))$ (\cite{miyaoka1984maximal} 4.14, see also \cite{wahl_chernclasses} 4.7).
%
%\ 
%
%We show that for $A_n$ singularities, the poles that $w\in H^0(\tilde {X}\setminus E,S^m\Omega^1_{\tilde {Y}})$ can acquire at the exceptional locus are milder (and to what extent) than logarithmic poles  (see \cite{thomas} for $A_1$). More precisely, in theorem A, we give the maximal effective  divisor $D$ such that $$H^0(\tilde {X}\setminus E,S^m\Omega^1_{\tilde {X}})=H^0(\tilde {X},S^m\Omega^1_{\tilde {X}}(\text {log}E)\otimes \mathcal {O}_{\tilde X}(-D) )$$.

Let $(\tilde {X},E)$ be the germ of the minimal resolution
of a quotient singularity $(X,x)$. It is well known that symmetric differentials $w\in H^0(\tilde {X}\setminus E,S^m\Omega^1_{\tilde {X}})$ acquire mild poles along the components of the exceptional divisor $E$. More precisely,  the poles of $w\in H^0(\tilde {X}\setminus E,S^m\Omega^1_{\tilde {X}})$ along $E$ are at worst logarithmic, i.e. $H^0(\tilde {X}\setminus E,S^m\Omega^1_{\tilde {X}})=H^0(\tilde {X},S^m\Omega^1_{\tilde {X}}(\text {log} E))$ (\cite{miyaoka1984maximal} 4.14, see also \cite{wahl_chernclasses} 4.7).

\ 

We show that for $A_n$ singularities, the poles that $w\in H^0(\tilde {X}\setminus E,S^m\Omega^1_{\tilde {X}})$ can acquire at the exceptional locus are milder (and to what extent) than logarithmic poles  (see \cite{thomas} for $A_1$). More precisely, in \hyperref[thm:3]{Theorem 3}, we give the maximal effective  divisor $D$ such that $$H^0(\tilde {X}\setminus E,S^m\Omega^1_{\tilde {X}})=H^0(\tilde {X},S^m\Omega^1_{\tilde {X}}(\text {log}E)\otimes \mathcal {O}_{\tilde X}(-D) )$$.

\subsubsection{Decomposition and the order of differentials in $S(\tilde {X}\setminus E)$}

In this section, $\tilde {X}$ is the minimal resolution of the affine model $X$ of the $A_n$ singularity as described in section 2.1. We have the commutative diagram involving the resolution $\sigma$ and the smoothing $\pi$ of $X$:

\begin{equation*}
\begin{tikzcd}
 & (\CC^2, 0) \arrow[dl, "\varphi"', dashed] \arrow[d, "\pi"] \\
(\tilde{X}, E) \arrow[r, "\sigma"]  & (X, x)  \\
\end{tikzcd}
\end{equation*}

The map $\varphi$ induces the isomorphisms between the algebras of symmetric differentials (see section 2.3.2):

\begin{align*} S\left(\tilde {X}\setminus E\right) &\xrightarrow[\cong] {\varphi^*} S\left(\CC^2\right)^{\mathbb{Z}_{n+1}}  \\ S\left(\tilde {X}\setminus \hat E\right)& \xrightarrow[\cong] {\varphi^*} S\left(\CC^*\times \CC^*\right)^{\mathbb{Z}_{n+1}}\tag {3.1}
    \end{align*}
%\begin{align*} S\left(\tilde {X}\setminus E\right) \xrightarrow[\cong] {\varphi^*} S\left(\CC^2\right)^{\mathbb{Z}_{n+1}}   \text{ , } \text { } S\left(\tilde {X}\setminus \hat E\right) \xrightarrow[\cong] {\varphi^*} S\left(\CC^*\times \CC^*\right)^{\mathbb{Z}_{n+1}}\tag {3.1}\end{align*}

\noindent (recall that $\hat E=E+E_0+E_{n+1}$ as in 2.1).

\ 

 Using the isomorphism (3.1) and the 3-gradation of $S\left(\CC^*\times \CC^*\right)^{\mathbb{Z}_{n+1}}$ described in 2.2.2,  we obtain the $(\hat k,i,m)-$decomposition of differentials $w \in S\left(\tilde {X}\setminus \hat E\right)$:

 \begin{align*}w=\sum_{\substack{m\in \mathbb{Z}_{\ge  0}\\ i,\hat k\in \mathbb{Z}}}w_{\hat k,i,m} \tag {3.2}\end{align*}

\noindent where the $w_{\hat k,i,m}\in H^0(\tilde {X}\setminus \hat E,S^m\Omega^1_{\tilde {X}})$ are such that  $\varphi^*w_{\hat k,i,m}\in \hat V_{\hat k,i,m}$. If $w \in S\left(\tilde {X}\setminus E\right)$, then we have

\begin{align*}w=\sum_{\substack{m, i\in \mathbb{Z}_{\ge  0}\\|\hat k|\le \frac{i+m}{n+1}}}w_{\hat k,i,m}. \tag {3.3}\end{align*}

\noindent In this case, we additionally have that $\varphi^*w\in S(\CC^2) $ and hence $\varphi^*w_{\hat k,i,m}\in \hat V^{\text{reg}}_{\hat k,i,m}$. 

We  say a differential $w\in  S\left(\tilde {X}\setminus \hat E\right)$  is of $type$ $(\hat k,i,m)$ if $\varphi^*w\in \hat V_{\hat k,i,m}$. The expression (3.2) is the decomposition of $w$ relative to the $(\hat k,i,m)$-types.

\

We define the $order$ of a symmetric differential $w\in  S\left(\tilde {X}\setminus \hat E\right)$ to be:

\begin{align*}\text {ord}(w)=\min \left\{i \text { } \Big| \text { }\varphi^*w= \sum_{\substack{ i,\hat k\in \mathbb{Z}}}w_{\hat k,i,m} \text  { and } w_{\hat k,i,m}\neq 0\right\}, \tag {3.4} \end{align*}

\noindent i.e. $\operatorname{ord}(w)$ is the smallest order of the differential monomials appearing in the monomial decomposition of $\varphi^*w$. The ord$(w)$ can also be described as the order of vanishing of $\varphi^*w$ at $0\in \CC^2$, this is consistent with the definition of order in theorem B for $w\in H^0(X\setminus x,S^m\Omega^1_X)$.

\ 

\subsubsection{Comparison with logarithmic poles}

\

We  characterize the allowed poles of a differential $w\in H^0(\tilde {X}\setminus  E,S^m\Omega^1_{\tilde {X}})$ along $E$  via a comparison to the  maximum poles allowed on logarithmic symmetric differentials $\mu \in H^0(\tilde X\setminus E,S^m\Omega^1_{\tilde  X}(\log E))$.

\

\

\begin{proof}(of \hyperref[thm:3]{Theorem 3}(a)) Let $w\in H^0(\tilde X\setminus E,S^m\Omega^1_{\tilde  X})$. The worst pole of $w$ along a component $E_r$ of $E$ will be the worst pole attained  by one of its $(\hat k,i,m)$-type components in the  decomposition (3.3).

\

Let $w$ be of $(\hat k,i,m)$-type. To determine the poles of $w$ along the components of $E$, we examine $w$ on the coordinate patches $U_r$, $r=0,...,n$, (described in 2.1) covering  $\tilde X$. Set $w_r:=w|_{U_r}$, note  $w_r=(\varphi_r^*)^{-1}(\varphi^*w)$ hence in $(\varphi_r^*)^{-1}\hat V_{\hat k,i,m}$. Using \ref{eq:2.17}, it follows that $w_r$ is in the span of the monomials in the block $B_{\frac {i+m}{2}+(\frac{n+1}{2}-r)\hat{k}, i+(n-2r)\hat{k}, m}$.

\

The relevant  observation  towards the comparison of the poles of $w\in H^0(\tilde X\setminus E,S^m\Omega^1_{\tilde  X})$ to logarithmic poles is the fact that we can rewrite the monomials in a block $B_{k,i,m}$, in the following form:
 
$$B_{k,i,m}=\Big \{z_1^{i+m-k}z_2^{k}\frac {dz_1^{m-q}}{z_1^{m-q}}\frac{dz_2^q}{z_2^q}\Big \}_{q=0,...,m}$$

\ 

Hence $w_r$ (of type $(\hat k,i,m)$) is given by a sum of the following monomials:

\begin{align*}\hspace{.4in} u_1^{\frac{i+m}{2}+(\frac{n-1}{2}-r)\hat k}u_2^{\frac{i+m}{2}+(\frac{n+1}{2}-r)\hat k}\frac {du_1^{m-q}}{u_1^{m-q}}\frac{du_2^q}{u_2^q}, \hspace{.2in}q=0,...,m\tag {3.5} \end{align*}

\

Next we prove the result for $A_n$ with $n\ge 2$ (the case $n=1$ follows from the same argument after a minor setup adjustment). For $r=1,...,n-1$:

$$S^m\Omega^1_{\tilde X}(\log E)|_{U_r}=\bigoplus_{q=0}^m\mathcal{O}_{U_r}\frac {du_1^{m-q}}{u_1^{m-q}}\frac{du_2^q}{u_2^q}$$

\noindent (recall: for each $r=1,...,n-1$, $E_r\cap U_r=\{u_2=0\}$ and $E_{r+1}\cap U_r=\{u_1=0\}$).

\

It follows from (3.5) that the order of the poles along $E_r$ of the monomials  involved in the sum giving $w_r$  deviates from the highest pole order possible for a logarithmic symmetric differential by subtracting $\frac{i+m}{2}+(\frac{n+1}{2}-r)\hat k$. Hence:

\begin{align*}w_{r}\in H^0\left (U_r,S^m\Omega^1_{\tilde X}(\log E)\otimes \mathcal{O}(-D)\right ) \end{align*}

\noindent with 

\vspace{-.15in}

\begin{align*}D=\left [\frac{i+m}{2}+\left(\frac{n+1}{2}-r\right)\hat k\right ]E_r+\left[\frac{i+m}{2}+\left(\frac{n-1}{2}-r\right)\hat k\right ]E_{r+1}\end{align*}

\

An easy consequence of the above is:

$$H^0(\tilde X\setminus E,S^m\Omega^1_{\tilde  X})=H^0(\tilde X,S^m\Omega^1_{\tilde  X}(\log E))$$

\noindent This follows since the condition that $D\ge 0$ in all $r=1,...,n-1$ is equivalent to $|\hat k|\le \frac{i+m}{n-1}$, which holds since the $(\hat k,i,m)$-components of $w\in H^0(\tilde X\setminus E,S^m\Omega^1_{\tilde  X})$ satisfy $|\hat k|\le \frac{i+m}{n+1}$.

\

\

The full strength of the result follows from:

$$a_r=\min \begin{cases}\frac{i+m}{2}+(\frac{n+1}{2}-r)\hat k \text { }\big | \text { } i\ge 0, |\hat k|\le \frac {i+m}{n+1}, (n+1)\hat k\equiv i+m \bmod{2}\Big \}\end{cases}$$

\noindent satisfies:

$$a_r:= \begin{cases}\sum_{j=1}^r \ceil {\frac{m+2-2j}{n+1}}& 1\le r\le\floor {\frac{n+1}{2}}\\\sum_{j=1}^{n+1-r} \ceil {\frac{m+2-2j}{n+1}}&\floor {\frac{n+1}{2}}<r \le n\end{cases}$$

\

\

 Set  $D=\sum_{r=1}^na_rE_r$. By construction all the $(\hat k,i,m)$-components of $w$, and hence $w$, belong to  $H^0(\tilde {X},S^m\Omega^1_{\tilde {X}}(\log E)\otimes \mathcal {O}_{\tilde X}(-D))$ and the maximality of $D$ is guaranteed by the definition  of $a_r$ and the fact.
    
\end{proof}

\

\subsubsection{ Order and holomorphic extension}

\ 

\ 

Part (b) of  \hyperref[thm:3]{Theorem 3} gives a criterion for the holomorphic extension  to $\tilde X$ of symmetric differentials on $\tilde X\setminus E$ involving the order of the differentials. This result is used through out section 2, it makes explicit that the $h^0(A_n,m)$ are finite since it implies that the polygon $\mathcal{P}_n(m)$ over which the lattice weighted sum giving $h^0(A_n,m)$ is bounded. We note that  this result can also be derived from corollary 1.

\begin{proof} (of \hyperref[thm:3]{Theorem 3} part (b)) If $w\in H^0(\tilde {X}\setminus E,S^m\Omega^1_{\tilde {X}})$, then the $(\hat k,i,m)$-decomposition of $w$ has $w=\sum w_{\hat k,i,m}$, with $i\in \mathbb{Z}_{\ge  0}$ and $|\hat k|\le \frac{i+m}{n+1}$. The differential $w$ extends regularly to $\tilde X$ if all the $w_{\hat k,i,m}$ do. Note that each $w_{\hat k,i,m}\in H^0(\tilde {X}\setminus E,S^m\Omega^1_{\tilde {X}})$, hence no need to check about poles along $E_0$ or $E_{n+1}$.

\

 The  conditions to guarantee no poles of $w_{\hat k,i,m}$ along $E_r$, $r=1,...,n$ are obtained using (3.5). From (3.5) it follows that all $w$ of type $(\hat k,i,m)$ do not acquire a pole along $E_r$  if and only if:

\begin{align*}
 \frac {i+m}{2}+\left(\frac{n+1}{2}-r\right)\hat k\ge m \tag {3.6}
\end{align*}

First observation is that the $r$ conditions (3.6) on the triples $(\hat k,i,m)$ can be reduced to the conditions for $r=1$ and $r=n$. Second observation is that for $i<m$ the conditions can't hold. Finally, if $i\ge m$, the conditions become:

\begin{align*}
 |\hat k|\le \frac {i-m}{n-1}
\end{align*}

The  condition above holds for the possible $\hat k$, i.e. in the range given by $|\hat k|\le \frac{i+m}{n+1}$, if $i\ge nm$.

\end{proof}

\ 

\

\vspace {-.2in}

\bibliographystyle{amsalpha}
\bibliography{references.bib}

\end{document}